\newtheorem{thm}{Theorem}
\newtheorem{defi}[thm]{Definition}
\newtheorem{lemma}[thm]{Lemma}
\newtheorem{prop}[thm]{Proposition}
\newtheorem{claim}[thm]{Claim}
\begin{document}

\def\marrow{{\boldmath {\marginpar[\hfill$\rightarrow \rightarrow$]{$\leftarrow \leftarrow$}}}}
\def\dom#1{{\sc D\"om\"ot\"or: }{\marrow\sf #1}}
\def\balazs#1{{\sc Bal\'azs: }{\marrow\sf #1}}
\def\janos#1{{\sc Ja\'nos: }{\marrow\sf #1}}

\def\eps{$\epsilon$ }

\title{Drawing planar graphs of bounded degree\\ with few slopes}

\author{Bal\'azs Keszegh\thanks{EPFL, Lausanne and R\'enyi Institute, Budapest} \and
J\'anos Pach\thanks{EPFL, Lausanne and R\'enyi Institute, Budapest.
Supported by Grants from NSF, SNF, NSA, PSC-CUNY, and OTKA.}\and
D\"om\"ot\"or P\'alv\"olgyi \thanks{EPFL, Lausanne and E\"otv\"os University, Budapest}}


\maketitle

\begin{abstract}
We settle a problem of Dujmovi\'c, Eppstein, Suderman, and Wood by showing that there exists a function $f$ with the property that every planar graph $G$ with maximum degree $d$ admits a drawing with noncrossing straight-line edges, using at most $f(d)$ different slopes. If we allow the edges to be represented by polygonal paths with {\em one} bend, then $2d$ slopes suffice. Allowing {\em two} bends per edge, every planar graph with maximum degree $d\ge 3$ can be drawn using segments of at most $\lceil d/2\rceil$ different slopes. There is only one exception: the graph formed by the edges of an octahedron is 4-regular, yet it requires 3 slopes. These bounds cannot be improved.
\end{abstract}

\section{Introduction}
A planar layout of a graph $G$ is called a {\em drawing} if the vertices of $G$ are represented  by distinct points in the plane and every edge is represented by a continuous arc connecting the corresponding pair of points and not passing through any other point representing a vertex \cite{DETT99}. If it leads to no confusion, in notation and terminology we make no distinction between a vertex and the corresponding point and between an edge and the corresponding arc. If the edges are represented by line segments, the drawing is called a {\em straight-line drawing}. The {\em slope} of an edge in a straight-line drawing is the slope of the corresponding segment.

In this paper, we will be concerned with drawings of planar graphs. Unless it is stated otherwise, all {\em drawings} will be {\em noncrossing}, that is, no two arcs that represent different edges have an interior point in common.

Every planar graph admits a straight-line drawing \cite{F48}. From the practical and aesthetical point of view, it makes sense to minimize the number of slopes we use \cite{WC94}.
The {\em planar slope number} of a planar graph $G$ is the smallest number $s$
with the property that $G$ has a straight-line drawing with edges
of at most $s$ distinct slopes. If $G$ has a vertex of degree $d$, then its planar slope number is at least $\lceil d/2\rceil$, because in a straight-line drawing no two edges are allowed to overlap.

Dujmo\-vi\'c, Eppstein, Suderman, and Wood \cite{DESW07} raised the question whether there exists a function $f$ with the property that the planar slope number of every planar graph with maximum degree $d$ can be bounded from above by $f(d)$. Jelinek et al.~\cite{JJ10} have shown that the answer is yes for {\em outerplanar} graphs, that is, for planar graphs that can be drawn so that all of their vertices lie on the outer face. In Section 2, we answer this question in full generality. We prove the following.

\begin{thm}\label{one} Every planar graph with maximum degree $d$ admits a straight-line drawing, using segments of $O(d^2(3+2\sqrt 3)^{12d})\le K^d$ distinct slopes.
\end{thm}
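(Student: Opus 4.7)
The natural plan is a reduction followed by an incremental geometric construction. First I would reduce to the case of plane triangulations of maximum degree bounded by some $D=O(d)$: given $G$ of maximum degree $d$ with a fixed planar embedding, triangulate each face by adding a bounded number of auxiliary edges and, if required, Steiner vertices of controlled degree. Since any straight-line drawing of the resulting triangulation with $s$ slopes restricts to one of $G$ with at most $s$ slopes, it suffices to exhibit, for each such $D$, a universal slope set $S$ of size $O\bigl(d^2(3+2\sqrt{3})^{12d}\bigr)$ so that every plane triangulation with maximum degree $D$ admits a straight-line drawing with edge slopes in $S$.

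For the construction itself, I would use a canonical ordering $v_1,\dots,v_n$ (in the sense of de Fraysseix--Pach--Pollack) and add the vertices one at a time, maintaining the invariant that after step $k$ the outer boundary is a convex polygon whose edges use only slopes from $S$, and that the drawing is ``thick enough'' near the outer face to admit the next vertex. When $v_k$ is attached to a contiguous arc $v_{i_1},\dots,v_{i_m}$ of the current outer boundary, I would position $v_k$ inside the thickness region around that arc, so that each new edge $v_k v_{i_j}$ takes a slope from $S$ and avoids the previously drawn edges. The heart of the argument is a geometric lemma asserting that, once $S$ is a suitably symmetric, well-spaced family of directions, such a position for $v_k$ always exists, perhaps after rescaling the already placed drawing. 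Each rescaling enlarges $S$ by a bounded multiplicative factor -- a trigonometric analysis of how far a convex ``cone'' must be widened to absorb one further admissible direction should yield the ratio $3+2\sqrt{3}$ -- and one would arrange that at most $O(d)$ such enlargements are ever needed, producing the claimed base $(3+2\sqrt{3})^{12d}$.

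The main obstacle I anticipate is designing $S$ so that three competing requirements are met simultaneously: (i) $S$ is closed under the local adjustments forced by integrating each new vertex; (ii) $S$ is dense and symmetric enough to accommodate any configuration of up to $d$ neighbors at a single vertex; and (iii) the total size of $S$ depends only on $d$, not on $n$. Unlike the classical unconstrained setting, where fresh slopes may be introduced for each new vertex, here we must live within a pre-fixed palette, and the rescaling parameters must be chosen delicately so that the newly placed edges neither cross previously drawn ones nor force new directions beyond the $O(d)$ enlargement budget. The parameter $d$ enters twice in the final bound: once through the $O(d)$ neighbors a vertex may acquire in a single step (constraining the local spacing of $S$) and once through a peeling/canonical-ordering depth argument that caps the number of enlargement steps -- together yielding the stated $O\bigl(d^2(3+2\sqrt{3})^{12d}\bigr)$.
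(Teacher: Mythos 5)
Your plan diverges completely from the paper's actual argument, and the step you yourself identify as ``the heart of the argument'' is precisely where it breaks down. In an incremental canonical-ordering construction with straight-line edges and a slope palette $S$ fixed in advance, the vertices $v_{i_1},\dots,v_{i_m}$ of the outer boundary to which $v_k$ must attach are already placed at specific points. Choosing the location of $v_k$ gives only two degrees of freedom, yet you need all $m$ segments $v_kv_{i_j}$ to simultaneously realize slopes from the finite set $S$; for $m\ge 3$ this is generically impossible, no matter how dense or symmetric $S$ is. Rescaling cannot rescue this: a uniform rescaling of the already-placed drawing preserves every slope and hence changes nothing, while a non-uniform one would push the previously drawn edges out of $S$. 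This overdetermination is exactly why the paper's own incremental ($st$-ordering) constructions appear only in Theorems~\ref{onebend} and~\ref{twobends}, where bends supply the missing degrees of freedom, and why for the straight-line case the paper abandons vertex-by-vertex insertion entirely.

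The paper instead uses a global geometric representation: the Malitz--Papakostas refinement of Koebe's circle packing theorem (Lemma~\ref{mp}), which guarantees a tangent-disk representation in which the radii of tangent disks differ by a factor of at most $(3+2\sqrt 3)^{d-2}$ --- this is where the constant $3+2\sqrt 3$ actually comes from, not from any cone-widening trigonometry. Each center is then snapped to the nearest point of a grid whose mesh $d^{s_i}$ is proportional to the radius of its disk; the radius bound ensures the snapping preserves the orientation of every triangle (so no crossings are created) and that every edge, after dividing out the coarser of its two endpoint meshes, joins two integer points at distance at most $R(d)=O(d\,(3+2\sqrt 3)^{2d})$, whence at most $\pi R^2(d)$ slopes occur. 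To make your approach work you would need to prove your geometric insertion lemma, and for the reason above I do not see how it can be true as stated.
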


The proof is based on a paper of Malitz and Papakostas \cite{MP94}, who used Koebe's theorem \cite{K36} on disk representations of planar graphs to prove the existence of drawings with relatively large angular resolution. As the proof of these theorems, our argument is nonconstructive; it only yields a nondeterministic algorithm with running time $O(dn)$. However, if one combines our result with a polynomial time algorithm that computes the $\epsilon$-approximation of the disk representation (see e.g. Mohar \cite{M93}), then one can obtain a deterministic algorithm running in time exponential in $d$ but polynomial in $n$.

For $d=3$, much stronger results are known than the one given by our theorem. Dujmovi\'c at al. \cite{DESW07} showed that every planar graph with maximum degree 3 admits a straight-line drawing using at most 3 different slopes, except for at most 3 edges of the outer face, which may require 3 additional slopes. This complements Ungar's old theorem \cite{U53}, according to which 3-regular, 4-edge-connected planar graphs require only 2 slopes and 4 extra edges.

The exponential upper bound in Theorem~\ref{one} is probably far from being optimal. However, we were unable to give any superlinear lower bound for the largest planar slope number of a planar graph with maximum degree $d$. The best constructions we are aware of are presented in Section 5.

It is perhaps somewhat surprising that if we do not restrict our attention to planar graphs, then no result similar to Theorem~\ref{one} holds.
For every $d\ge 5$, Bar\'at, Matou\v sek, and Wood \cite{BMW06} and,
independently, Pach and P\'alv\"olgyi \cite{PP06} constructed graphs with maximum degree $d$ with the property that no matter how we draw them in the plane with (possibly crossing) straight-line edges, we must use an arbitrarily large number of slopes. (See also \cite{DSW07}.) The case $d\le 3$ is different: Keszegh et al. \cite{KPPT08} proved that every graph with maximum degree 3 can be drawn with 5 slopes. Moreover, Mukkamala and Szegedy \cite{MSz07} showed that 4 slopes suffice if the graph is connected.
The case $d=4$ remains open.

Returning to planar graphs, we show that significantly fewer slopes are sufficient if we are allowed to represent the edges by short noncrossing poly\-gonal paths. If such a path consists of $k+1$ segments, we say that the edge is drawn by $k$ {\em bends}. In Section 3, we show if we allow one bend per edge, then every planar graph can be drawn using segments with $O(d)$ slopes.

\begin{thm}\label{onebend}
Every planar graph $G$ with maximum degree $d$ can be drawn with at most $1$ bend per edge, using at most $2d$ slopes.
\end{thm}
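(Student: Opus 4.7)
I would prove Theorem~\ref{onebend} by an incremental construction driven by a canonical ordering of a planar triangulation containing $G$. Take such an ordering $v_1,v_2,\ldots,v_n$ (\emph{\`a la} de Fraysseix--Pach--Pollack) and place the vertices so that the $y$-coordinate of $v_k$ strictly increases in $k$; every edge $v_iv_j$ of $G$ then has a well-defined lower endpoint $v_i$ (with $i<j$) and upper endpoint $v_j$. Reserve two disjoint sets of $d$ slopes each, $\Sigma_{\uparrow}$ and $\Sigma_{\downarrow}$, the first consisting of steep upward-pointing directions and the second of steep downward-pointing directions, for a total of $2d$ slopes.

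Draw every edge as a two-segment polygonal path whose segment at the upper endpoint uses a slope from $\Sigma_{\downarrow}$ and whose segment at the lower endpoint uses a slope from $\Sigma_{\uparrow}$, the two segments meeting at a single bend. Since $\deg_G(v)\le d$, the at most $d$ up-segments (resp.\ down-segments) at any vertex $v$ can be given distinct slopes from $\Sigma_{\uparrow}$ (resp.\ $\Sigma_{\downarrow}$). I would dictate the specific assignment by requiring that at every vertex the slopes of the outgoing segments appear in the same cyclic order as the corresponding edges in the planar embedding, so that the combinatorial angular order around each vertex is respected geometrically.

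The main obstacle is verifying that the resulting drawing is non-crossing. Local non-crossing near a common vertex is forced by the distinct-slope rule. For global non-crossing I would argue by induction on $k$: when $v_k$ is inserted, its down-neighbors $v_{i_1},\ldots,v_{i_p}$ form a contiguous arc of the outer face of the partial drawing, and the new edges must fit into the open region above. Provided $v_k$ is placed sufficiently high (and the horizontal coordinates are stretched as needed), and its slope assignment respects the left-to-right order of the down-neighbors along that outer face, each new edge's two segments should be squeezable into this region without meeting any previously drawn segment. The subtle invariant to maintain is that the slopes already assigned to up-segments at old vertices remain compatible with the cyclic order of their \emph{future} up-edges; this typically forces one to commit in advance to a slope at both endpoints of every edge at the moment it is first touched, and controlling this compatibility across the induction is where I expect the bulk of the technical work to lie.
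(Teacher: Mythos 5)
Your plan has a genuine gap, and it sits exactly where you suspect it does. With only one bend per edge, once you have committed to a slope at each endpoint, the entire polygonal path is determined by the two endpoint positions: the bend is forced to be the intersection of the two lines. In your incremental construction the up-segments of the pending edges (edges whose upper endpoint has not yet been placed) must therefore be extended arbitrarily far upward as later vertices are placed higher and higher. But two such pending segments emanating from different vertices with different near-vertical slopes from $\Sigma_{\uparrow}$ will eventually cross unless their slopes are ordered consistently with their left-to-right order along the outer face --- and that is a \emph{global} constraint on all pending edges simultaneously, of which there can be $\Theta(n)$, so it cannot be enforced with only $d$ up-slopes. (This is precisely why the paper's Section~4 construction, which is the incremental $st$-ordering argument you are gesturing at, makes all pending middle pieces \emph{exactly vertical} --- parallel segments never cross --- and pays for this with a second bend.) A further, smaller problem: your proposal to ``stretch the horizontal coordinates as needed'' is incompatible with a fixed slope set, since any such stretch changes the slopes of every segment already drawn.

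The paper's actual proof of Theorem~\ref{onebend} abandons the incremental approach entirely. It invokes the de Fraysseix--Ossona de Mendez--Rosenstiehl representation of a planar graph by tangent $T$-shapes on an $n\times n$ grid: each vertex sits at the center of its $T$-shape, each edge is the L-shaped path through the tangency point (one horizontal piece along a hat, one vertical piece along a leg), and these L-paths are already essentially disjoint because the $T$-shapes are noncrossing. The only defect --- overlaps of the L-paths near shared endpoints --- is repaired by tilting the horizontal pieces to $d$ almost-horizontal slopes $\alpha_1,\dots,\alpha_d$ and the vertical pieces to $d$ almost-vertical slopes $\beta_1,\dots,\beta_d$, indexed by the cyclic order of tangencies around each $T$-shape; each perturbed path stays within distance $1/2$ of its L-path, so planarity is inherited rather than re-proved by induction. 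If you want to salvage your route, you would in effect have to rediscover a contact structure that pins down where every bend goes; as written, the induction does not close.
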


Allowing {\em two} bends per edge yields an optimal result: almost all planar graphs with maximum degree $d$ can be drawn with $\left\lceil d/2 \right\rceil$ slopes. In Section 4, we establish

\begin{thm}\label{twobendsimproved}\label{twobends}
Every planar graph $G$ with maximum degree $d\ge 3$ can be drawn with at most 2 bends per edge, using segments of at most $\lceil d/2\rceil$ distinct slopes. The only exception is the graph formed by the edges of an octahedron, which is 4-regular, but requires 3 slopes. These bounds are best possible.
\end{thm}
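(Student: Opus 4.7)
The lower bound $\lceil d/2\rceil$ is immediate: at any vertex of degree $d$ no two incident edges can share a slope, since the initial segments of those edges would overlap on a neighborhood of the vertex. A single vertex of maximum degree therefore forces $\lceil d/2\rceil$ distinct slopes. For the octahedron this count would allow $2$ slopes, so ruling that out needs a separate argument, which I defer to the end of the plan.

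For the upper bound I fix the $\lceil d/2\rceil$ equally spaced slopes $\alpha_i=i\pi/\lceil d/2\rceil$, giving exactly $2\lceil d/2\rceil\ge d$ port directions around each vertex. Pick any combinatorial planar embedding of $G$, and at each vertex $v$ assign the incident edges to pairwise distinct port directions that follow the cyclic order of the embedding; this is feasible because $\deg(v)\le 2\lceil d/2\rceil$. Each edge thus comes with a specified first-segment direction at each of its endpoints.

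Now I would build the drawing incrementally along a Kant-style canonical ordering $v_1,\ldots,v_n$ of a planar triangulation extending $G$, so that at step $k$ the already-placed vertices form a convex arc and $v_k$ sees all its previously placed neighbors from above. Vertices are placed on a sufficiently fine triangular lattice aligned with the chosen slope set. When $v_k$ is added, each edge from $v_k$ to a prior neighbor $u$ is routed as a three-segment polyline: a first segment leaving $v_k$ in its assigned port direction at $v_k$, a last segment entering $u$ opposite to its assigned port direction at $u$, and a middle segment of an allowed slope connecting them. Taking $v_k$ high enough above the current outer arc provides enough vertical slack to choose the lengths of the first and last segments so that the middle segment realizes one of the $\lceil d/2\rceil$ slopes, and so that the polylines for the different edges at $v_k$ lie in disjoint vertical corridors over their respective endpoints.

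The main obstacle, and the step that requires the most care, is to maintain global disjointness of these corridors while simultaneously keeping every slope used in the fixed palette. I would maintain as an invariant that each currently outer vertex has a clear upward cone of angle at least $\pi-\pi/\lceil d/2\rceil$ reserved for its remaining edges, and show that this invariant is preserved by the addition step provided $v_k$ is placed sufficiently high. For the octahedron, I would finish by exhibiting an explicit $3$-slope drawing on a triangular lattice, and prove the matching lower bound by case analysis: with only two slopes each of the six degree-$4$ vertices must use all four of its port directions, and the ensuing rigidity of the cyclic port pattern conflicts with the $8$-face triangular structure of the octahedron regardless of which pair of slopes is chosen or how the middle segments are routed.
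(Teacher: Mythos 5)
Your overall architecture --- an incremental bottom-to-top construction, three-segment edges whose middle pieces live in disjoint vertical corridors, and a contiguous fan of equally spaced port directions at each vertex --- is genuinely the same as the paper's, and your $\lceil d/2\rceil$ lower-bound observation matches the paper's. But there are two concrete gaps, and the paper's specific choices exist precisely to avoid them.

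First, the ordering and the port assignment. You take a canonical ordering of a triangulation extending $G$. Triangulating can raise vertex degrees well beyond $d$ (the paper accepts that loss only in Theorem~1, where the bound is exponential anyway), and even if you draw only the edges of $G$, the canonical ordering of the triangulation gives no control over where the $G$-neighbors of $v_k$ lie: $v_k$ may have all $d$ of its $G$-neighbors among $v_1,\dots,v_{k-1}$. Then every edge at $v_k$ must leave toward a point below it, but only about half of the $2\lceil d/2\rceil$ directed port slopes point into the lower half-plane, so the port assignment you fix a priori from the cyclic order of the embedding cannot be realized geometrically; placing $v_k$ higher does not create additional downward directions. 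The paper avoids this by first reducing to biconnected graphs with an explicit cut-vertex lemma, then using an $st$-ordering (Lempel--Even--Cederbaum, refined by Biedl--Kant so that $v_1v_2$ is an outer edge): every intermediate vertex has at least one preceding and one following neighbor \emph{in $G$}, its preceding edges arrive as a consecutive block of pending vertical segments (Propositions~8 and~9), and $v_{i+1}$ is placed above the \emph{median} of that block, with the verticals on either side re-extended to different heights so that the at most $d-1$ downward connections plus the remaining upward ports fit into one contiguous interval of the $d$ directed slopes. This median-and-reextension device is the step your sketch silently needs; the slope assignment at each vertex must be made dynamically at insertion time, not fixed in advance.

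Second, the exceptional case and the unproved invariant. Your octahedron argument is only a plan (``the ensuing rigidity \dots conflicts with the $8$-face structure''); nothing is actually ruled out, and $d=4$, where the octahedron is the unique exception, is exactly the case your general scheme covers least cleanly. The paper handles $d=4$ by citing Liu--Morgana--Simeone and Biedl--Kant and runs its own induction only for even $d\ge 6$; a self-contained write-up must either reproduce that analysis or cite it likewise. Finally, the invariant you propose (a reserved upward cone of angle $\pi-\pi/\lceil d/2\rceil$ at every outer vertex) is asserted but never shown to be preserved across insertions; in the paper the corresponding bookkeeping is carried by the auxiliary lines $l_i$ and the pending-order, which are proved to behave correctly from the $st$-ordering.
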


It follows from the proof of Theorem~\ref{twobends} that in the cyclic order of directions, the slopes of the edges incident to any given vertex form a contiguous interval. Moreover, the $\lceil d/2\rceil$ directions we use can be chosen to be equally spaced in $[0,2\pi)$. We were unable to guarantee such a nice property in Theorem~\ref{onebend}: even for a fixed $d$, as the number of vertices increases, the smallest difference between the $2d-2$ slopes we used tends to zero. We suspect that this property is only an unpleasant artifact of our proof technique.

\section{Straight-line drawings--Proof of Theorem~\ref{one}}

Note that it is sufficient to prove the theorem for triangulated planar graphs,  because any planar graph can be triangulated by adding vertices and edges so that the degree of each vertex increases only by a factor of at most three \cite{PT06}, so at the end we will lose this factor.

We need the following result from \cite{MP94}, which is not displayed as a theorem there, but is stated right above Theorem 2.2.

\begin{lemma}\label{mp} {\rm (Malitz-Papakostas)} The vertices of any triangulated planar graph $G$ with maximum degree $d$ can be represented by nonoverlapping disks in the plane so that two disks are tangent to each other if and only if the corresponding vertices are adjacent, and the ratio of the radii of any two disks that are tangent to each other is at least $\alpha^{d-2},$ where $\alpha=\frac 1{3+2\sqrt 3}\approx 0.15$.
\end{lemma}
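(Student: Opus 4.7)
The plan is to derive this lemma from Koebe's circle packing theorem by a careful local analysis of the resulting disk arrangement. Koebe's theorem applied to the triangulation $G$ produces a representation by closed disks with disjoint interiors where two disks are tangent iff the corresponding vertices are adjacent; this handles existence for free, and the real work is the quantitative radius bound.

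The key structural observation is that in a triangulation, the disks tangent to $D_v$ form a tangent cycle around $D_v$: the $\deg(v) \le d$ neighbors appear in cyclic order as tangent disks around $D_v$, and because every bounded face is a triangle, any two neighbors consecutive in this cyclic order are themselves tangent. By the right-triangle formed by the two centers and the tangency point, a neighbor $D_w$ of radius $r_w$ subtends at the center of $D_v$ the angle $\theta(r_w,r_v)=2\arcsin\bigl(r_w/(r_w+r_v)\bigr)$, and the sum of these angles over all neighbors of $v$ is exactly $2\pi$. Moreover, the mutual tangency of two consecutive neighbors $D_{w}, D_{w'}$ that are both tangent to $D_v$ determines $\angle w\, v\, w'$ exactly in terms of $r_v,r_w,r_{w'}$ (by applying the law of cosines to the triangle of centers, whose sides have lengths $r_v+r_w$, $r_v+r_{w'}$, $r_w+r_{w'}$).

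To bound the ratio across a single edge, I would pick an edge $uv$ minimising $\rho := r_u/r_v \le 1$ over all edges of $G$, and then walk around $v$ through its cyclic list of neighbours $u=w_0,w_1,\ldots,w_{\ell-1},w_\ell=u$ with $\ell = \deg(v) \le d$. By the minimality of $\rho$, each consecutive pair satisfies $\min(r_{w_i},r_{w_{i+1}})/\max(r_{w_i},r_{w_{i+1}}) \ge \rho$ (since $w_iw_{i+1}$ is an edge too), and similarly every $r_{w_i}/r_v \ge \rho$. Substituting the tangency-determined angle formula for each consecutive pair $w_i,w_{i+1}$ into the equation $\sum_{i=0}^{\ell-1} \angle w_iv w_{i+1} = 2\pi$ converts the geometric constraints into a recursive inequality on the sequence $r_{w_0},r_{w_1},\ldots,r_{w_\ell}$. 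The extremal case is the skewed geometric progression along which the radii decrease as slowly as possible toward a minimum and then climb back: solving this recurrence gives a lower bound of the form $\rho \ge \alpha^{d-2}$.

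The main obstacle is extracting the precise constant $\alpha = 1/(3+2\sqrt 3)$. This value emerges as the extremal solution of the algebraic/trigonometric equation encoding the tightest configuration---a small disk wedged between two larger disks around a common neighbour, with every tangency flush---which reduces to a quadratic whose roots involve $\sqrt 3$. Checking that the correct exponent is $d-2$ (rather than $d$ or $d-1$) requires careful bookkeeping: two of the $\deg(v)$ angles around $v$ are consumed by the edge $uv$ itself and by the closing of the cycle, leaving only $d-2$ genuinely free steps in which the radius ratio can deteriorate, which is where the $d-2$ in the exponent comes from.
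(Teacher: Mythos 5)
Your overall strategy---Koebe's theorem followed by an analysis of the ring of mutually tangent neighbours around a vertex---is the same one the paper sketches, and your identification of where $\alpha$ comes from is right: $3+2\sqrt 3$ is the curvature of the inner Soddy circle of three mutually tangent unit circles, i.e., the Descartes-type quadratic you allude to. But there is a genuine gap at the very first step. You write that Koebe's theorem ``handles existence for free, and the real work is the quantitative radius bound''; in fact the quantitative bound is \emph{false} for a general Koebe representation. The circle packing of a triangulation is unique only up to M\"obius transformations, and these do not preserve ratios of radii: already for $K_4$ (the case $d=3$) one may take three mutually tangent disks of radii $1,\varepsilon,\varepsilon$ together with their inner Soddy disk, and the tangent pair of radii $1$ and $\varepsilon$ defeats any fixed lower bound. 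The paper's argument therefore begins by applying a conformal (M\"obius) map that makes the three disks of the outer face equal, and only then runs the ring argument---and only at \emph{internal} vertices, where the neighbours genuinely form a closed chain with angle sum $2\pi$. Your argument picks the edge $uv$ minimising $r_u/r_v$ and walks around $v$; if $v$ lies on the outer face, its neighbours do not wrap around $D_v$, the angle sum is not $2\pi$, and the analysis gives nothing. The normalisation is not cosmetic; it is exactly what disposes of the boundary.

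A second, smaller issue: the heart of the lemma---that the closing condition $\sum_i\angle w_i v w_{i+1}=2\pi$ for a ring of at most $d$ mutually wedged disks forces $\min_i r_{w_i}\ge\alpha^{d-2}\,r_v$---is precisely the part you do not carry out. You assert that ``solving this recurrence gives a lower bound of the form $\rho\ge\alpha^{d-2}$'' and concede that the bookkeeping for the exponent $d-2$ still needs to be checked; since this Ring-Lemma-type estimate with its explicit constant is the entire content of the statement beyond Koebe, what you have is a plausible plan rather than a proof. The paper itself defers these details to \cite{MP94}, but it does record the two ingredients your plan omits: the normalisation of the outer face and the restriction of the ring argument to internal disks.
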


Lemma~\ref{mp} can be established by taking any representation of the vertices of $G$ by tangent disks, as guaranteed by Koebe's theorem, and applying a conformal mapping to the plane that takes the disks corresponding to the three vertices of the outer face to disks of the same radii. The lemma now follows by the observation that any internal disk is surrounded by a ring of at most $d$ mutually touching disks, and the radius of none of them can be much smaller than that of the central disk.

The idea of the proof of Theorem \ref{one} is as follows. Let $G$ be a triangulated planar graph with maximum degree $d$, and denote its vertices by $v_1, v_2, \ldots$. Consider a disk representation of $G$ meeting the requirements of Lemma~\ref{mp}. Let $D_i$ denote the disk that represents $v_i$, and let $O_i$ be the center of $D_i$. By properly scaling the picture if necessary, we can assume without loss of generality that the radius of the smallest disk $D_i$ is sufficiently large. Place an integer grid on the plane, and replace each center $O_i$ by the nearest grid point. Connecting the corresponding pairs of grid points by segments, we obtain a straight-line drawing of $G$. The advantage of using a grid is that in this way we have control of the slopes of the edges. The trouble is that the size of the grid, and thus the number of slopes used, is very large. Therefore, in the neighborhood of each disk $D_i$, we use a portion of a grid whose side length is proportional to the radius of the disk. These grids will nicely fit together, and each edge will connect two nearby points belonging to grids of comparable sizes. Hence, the number of slopes used will be bounded. See Figure \ref{fig:discs}.

\begin{figure}[ht]
\centering
		\includegraphics[scale=0.65]{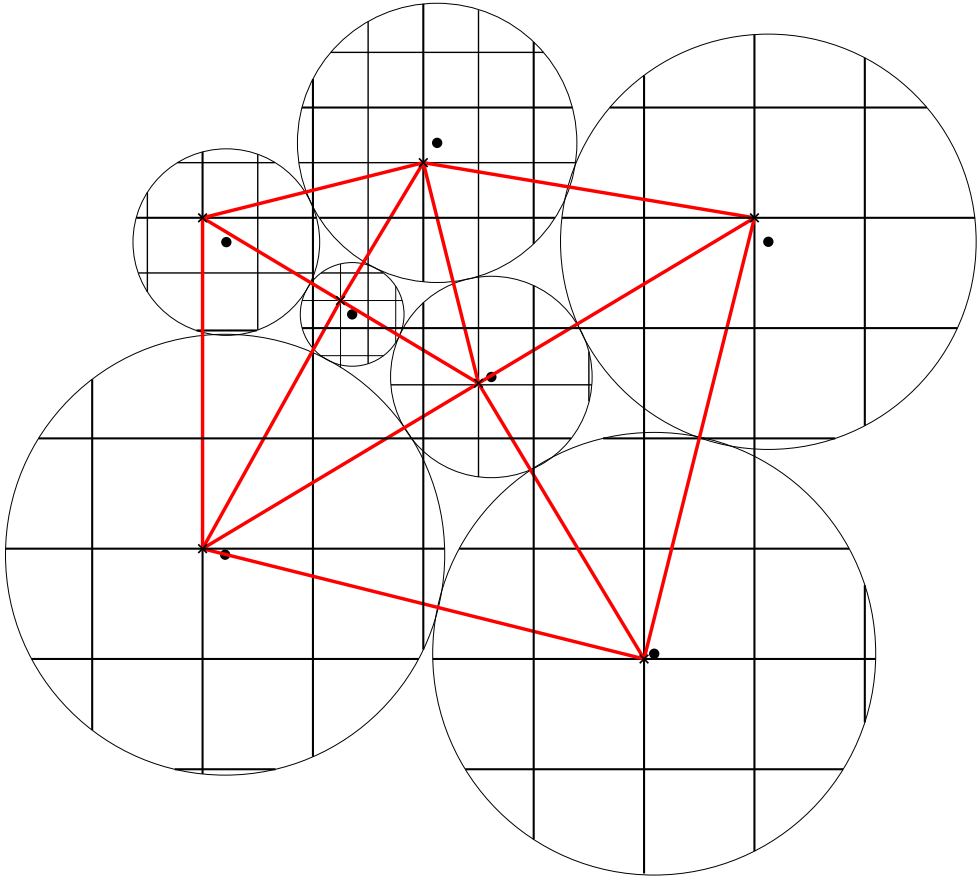}
               \hspace{5mm}
		 \caption{Straight-line graph from disk representation}
		\label{fig:discs}
\end{figure}

Now we work out the details. Let $r_i$ denote the radius of $D_i\; (i=1,2\ldots)$, and suppose without loss of generality that $r^*$, the radius of the smallest disk is
$$r^*=min_i r_i=\sqrt{2}/\alpha^{d-2}>1,$$
where $\alpha$ denotes the same constant as in Lemma~\ref{mp}.

Let $s_i=\lfloor\log_d (r_i/r^*)\rfloor\ge 0$, and represent each vertex $v_i$ by the integer point nearest to $O_i$ such that both of its coordinates are divisible by $d^{s_i}$. (Taking a coordinate system in general position, we can make sure that this point is unique.) For simplicity, the point representing $v_i$ will also be denoted by $v_i$. Obviously, we have that the distance between $O_i$ and $v_i$ satisfies
$$\overline{O_iv_i}< \frac{d^{s_i}}{\sqrt{2}}.$$

Since the centers $O_i$ of the disks induce a (crossing-free) straight-line drawing of $G$, in order to prove that moving the vertices to $v_i$ does not create a crossing, it is sufficient to verify the following statement.

\begin{lemma} For any three mutually adjacent vertices, $v_i, v_j, v_k$ in $G$, the orientation of the triangles $O_iO_jO_k$ and $v_iv_jv_k$ are the same.
\end{lemma}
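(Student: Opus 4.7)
My plan is to compare the signed areas of triangles $O_iO_jO_k$ and $v_iv_jv_k$ and show they have the same sign. Without loss of generality assume $r_i \le r_j \le r_k$, and write $\epsilon_a = v_a - O_a$ for the displacement of vertex $a$. Combining the construction (each $v_a$ is within half the diagonal of a grid cell of side $d^{s_a}$ from $O_a$) with $d^{s_a} \le r_a/r^*$ and the choice $r^* = \sqrt{2}/\alpha^{d-2}$ gives the key displacement bound
\[
|\epsilon_a| \;<\; \frac{d^{s_a}}{\sqrt{2}} \;\le\; \frac{r_a\,\alpha^{d-2}}{2},
\]
so every vertex is displaced by a tiny fraction of its own disk's radius.

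Next I would lower bound the area of the unperturbed triangle. Since its side lengths are $r_i+r_j,\,r_j+r_k,\,r_i+r_k$ with semiperimeter $r_i+r_j+r_k$, Heron's formula gives $\operatorname{Area}(O_iO_jO_k) = \sqrt{(r_i+r_j+r_k)\,r_ir_jr_k} \ge r_k\sqrt{r_ir_j}$. For the perturbation I expand
\[
2\bigl[\operatorname{SA}(v_iv_jv_k) - \operatorname{SA}(O_iO_jO_k)\bigr] = \det(\epsilon_j-\epsilon_i,\,O_k-O_i) + \det(O_j-O_i,\,\epsilon_k-\epsilon_i) + \det(\epsilon_j-\epsilon_i,\,\epsilon_k-\epsilon_i),
\]
bounding each determinant by the product of the Euclidean norms of its columns. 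With the displacement estimate above, the two linear terms together contribute at most $(r_i+r_j)(r_i+r_k)\,\alpha^{d-2} \le 4 r_j r_k\,\alpha^{d-2}$, and the quadratic term is smaller by a further factor $\alpha^{d-2}/2$.

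Dividing by $2\,\operatorname{Area}(O_iO_jO_k) \ge 2 r_k\sqrt{r_ir_j}$, the relative perturbation of the signed area is bounded by $2\sqrt{r_j/r_i}\,\alpha^{d-2}$ plus a smaller quadratic correction. Here Lemma~\ref{mp} re-enters crucially: applied to the adjacent pair $v_i,v_j$ it gives $r_j/r_i \le \alpha^{-(d-2)}$, so $\sqrt{r_j/r_i} \le \alpha^{-(d-2)/2}$, and the whole relative perturbation is at most $2\alpha^{(d-2)/2}$ plus lower-order terms. With $\alpha = 1/(3+2\sqrt{3}) \approx 0.155$, this is strictly below $1$ for every $d \ge 3$, so $\operatorname{SA}(v_iv_jv_k)$ cannot vanish and its sign agrees with that of $\operatorname{SA}(O_iO_jO_k)$, which is the desired orientation equivalence. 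The cases $d \le 2$ are vacuous, since the only triangulated planar graph of maximum degree at most $2$ is $K_3$, for which the statement is trivial.

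The main obstacle is the ``skinny'' regime, where one of the three radii is much larger than the others: the smallest disk contributes a tiny area, yet its grid-induced displacement is proportional to its own radius, so a crude bound overshoots. What rescues the argument is precisely the Malitz--Papakostas control $r_a/r_b \le \alpha^{-(d-2)}$ on the radii of adjacent disks, together with the scaling $r^* = \sqrt{2}/\alpha^{d-2}$ engineered to cancel the matching exponential factor. Everything else is a routine determinant estimate.
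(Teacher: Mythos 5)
Your proof is correct, and it rests on exactly the same two quantitative ingredients as the paper's own argument: the grid-snapping bound $\overline{O_av_a}<d^{s_a}/\sqrt2\le \alpha^{d-2}r_a/2$ and the Malitz--Papakostas ratio bound between adjacent radii. Where you differ is the finishing step. The paper argues that an orientation flip would force some vertex to move by at least half the smallest altitude of $O_iO_jO_k$, bounds that altitude below by $r_{\min}$, and observes that every displacement is already below $\alpha^{d-2}r_{\max}/2\le r_{\min}/2$ --- a three-line chain of inequalities. You instead expand the signed-area determinant, lower-bound the unperturbed area via Heron's formula, and show the relative perturbation is at most $2\alpha^{(d-2)/2}$ plus a smaller correction, which is below $1$ for $d\ge3$. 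Your route is more computational but also more self-contained: the paper's ``half the smallest altitude'' criterion is asserted without justification (it needs the observation that every line misses some vertex of the triangle by at least half the minimum altitude, or an equivalent continuity argument), whereas a signed area that changes by less than its own magnitude trivially cannot change sign. The only cosmetic loss is that your numerical bound degenerates at $d=2$, a case the paper's chain happens to handle uniformly; you correctly dispose of it separately.
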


\begin{proof} By Lemma~\ref{mp}, the ratio between the radii of any two adjacent disks is at least $\alpha^{d-2}$. Suppose without loss of generality that $r_i\ge r_j\ge r_k\ge \alpha^{d-2}r_i$.
For the orientation to change, at least one of $\overline{O_iv_i}$, $\overline{O_jv_j}$, or $\overline{O_kv_k}$ must be at least half of the smallest altitude of the triangle $O_iO_jO_k$, which is at least $\frac{r_k}{2}$.

On the other hand, as we have seen before, each of these numbers is smaller than
$$\frac{d^{s_i}}{\sqrt{2}}\le\frac{r_i/r^*}{\sqrt{2}}
=\frac{{\alpha}^{d-2}r_i}{2}\le \frac{r_k}{2}$$
which completes the proof.
\end{proof}

Now we are ready to complete the proof of Theorem~\ref{one}. Take an edge $v_iv_j$ of $G$, with $r_i\ge r_j\ge \alpha^{d-2}r_i$.
The length of this edge can be bounded from above by
$$\overline{v_iv_j}\le  \overline{O_iO_j}+\overline{O_iv_i}+\overline{O_jv_j}
\le r_i+r_j+\frac{d^{s_i}}{\sqrt{2}}+\frac{d^{s_j}}{\sqrt{2}}
\le 2r_i+{\sqrt{2}}{d^{s_i}}
\le 2r_i+{\sqrt{2}}{r_i/r^*}$$
$$ \le r_i/r^*(2r^*+\sqrt{2})
\le \frac{r_j/r^*}{\alpha^{d-2}}(2r^*+\sqrt{2})
< \frac{d^{s_j+1}}{\alpha^{d-2}}(\frac{2\sqrt{2}}{\alpha^{d-2}}+\sqrt{2}).$$

According to our construction, the coordinates of $v_j$ are integers divisible by $d^{s_j}$, and the coordinates of $v_i$ are integers divisible by $d^{s_i}\ge d^{s_j}$, thus also by $d^{s_j}$.

Thus, shrinking the edge $v_iv_j$ by a factor of $d^{s_j}$, we obtain a segment whose endpoints are integer points at a distance at most
$\frac{d}{\alpha^{d-2}}(\frac{2\sqrt{2}}{\alpha^{d-2}}+\sqrt{2}).$
Denoting this number by $R(d)$, we obtain that the number of possible slopes for $v_iv_j$, and hence for any other edge in the embedding, cannot exceed the number of integer points in a disk of radius $R(d)$ around the origin. Thus, the planar slope number of any triangulated planar graph of maximum degree $d$ is at most roughly $R^2(d)\pi=O(d^2/\alpha^{4d})$, which completes the proof. \hfill $\square$

\medskip

Our proof is based on the result of Malitz and Papakostas that does not have an algorithmic version. However, with some reverse engineering, we can obtain a nondeterministic algorithm for drawing a triangulated planar graph of bounded degree with a bounded number of slopes. Because of the enormous constants in our expressions, this algorithm is only of theoretical interest. Here is a brief sketch.
\medskip

{\em Nondeterministic algorithm.} First, we guess the three vertices of the outer face and their coordinates in the grid scaled according to their radii. Then embed the remaining vertices one by one. For each vertex, we guess the radius of the corresponding disk as well as its coordinates in the proportionally scaled grid. This algorithm runs in nondeterministic $O(dn)$ time.

\section{One bend per edge--Proof of Theorem~\ref{onebend}}

In this section, we represent edges by noncrossing polygonal paths, each consisting of at most two segments. Our goal is to establish Theorem~\ref{onebend}, which states that the total number of directions assumed by these segments grows at most linearly in $d$.

The proof of Theorem~\ref{onebend} is based on a result of Fraysseix et al. \cite{FMR94}, according to which every planar graph can be represented as a contact graph of $T$-shapes. A {\em $T$-shape} consists of a vertical and a horizontal segment such that the upper endpoint of the vertical segment lies in the interior of the horizontal segment. The vertical and horizontal segments of $T$ are called its {\em leg} and {\em hat}, while their point of intersection is the {\em center} of the $T$-shape. The two endpoints of the hat and the bottom endpoint of the leg are called {\em ends} of the $T$-shape.

Two $T$-shapes are {\em noncrossing} if the interiors of their segments are disjoint. Two $T$-shapes are {\em tangent} to each other if they are noncrossing but they have a point in common.

\begin{lemma}\label{Tshapes} {\rm (Fraysseix et al.)} The vertices of any planar graph with $n$ vertices can be represented by noncrossing $T$-shapes such that
\begin{enumerate}
\item two $T$-shapes are tangent to each other if and only if the corresponding vertices are adjacent;
\item the centers and the ends of the $T$-shapes belong to an $n\times n$ grid.
\end{enumerate}
Moreover, such a representation can be computed in linear time.
\end{lemma}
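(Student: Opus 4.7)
The plan is to reduce to planar triangulations and then build T-shapes using a Schnyder realizer. Any planar graph $G$ on $n \geq 3$ vertices can be augmented, by adding edges only, to a planar triangulation $G^+$ on the same vertex set. A T-shape representation of $G^+$ on a fine enough integer grid yields a representation of $G$: for each edge of $G^+ \setminus G$, the corresponding tangency can be destroyed by an arbitrarily small perturbation (shortening a leg or a hat on one side of the contact point), while all tangencies corresponding to edges of $G$ survive. Performing the construction on a grid of side $cn$ for a suitable constant $c$ and then rescaling keeps the final representation within the claimed $n \times n$ grid.

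For a triangulation with outer face $a_1 a_2 a_3$, the core construction invokes a Schnyder realizer: an orientation and 3-coloring of the interior edges such that each interior vertex has exactly one outgoing edge of each color, and its incoming edges of each color form a contiguous arc between the outgoing edges of the other two colors. The monochromatic edges form trees $T_1, T_2, T_3$ rooted at $a_1, a_2, a_3$. Designate $a_3$ the bottom outer vertex. For each interior vertex $v$, let $R_i(v)$ denote the planar region bounded by the paths from $v$ to $a_i$ in $T_i$ and from $v$ to $a_{i+1}$ in $T_{i+1}$ together with the outer edge $a_i a_{i+1}$, and let $f_i(v)$ count the interior faces in $R_i(v)$. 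Set $(x(v), y(v)) = (f_1(v), f_3(v))$; standard properties of Schnyder realizers give integer coordinates in a grid of side $O(n)$. Place the T-shape $T_v$ with its center at $(x(v), y(v))$, extend its leg downward until it touches the hat of the $T_3$-parent of $v$, and extend its hat leftward until it touches the leg of the $T_1$-parent and rightward until it touches the leg of the $T_2$-parent. The outer vertices $a_1, a_2, a_3$ are represented by three degenerate T-shapes (or long framing segments) along the boundary of the grid.

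The main obstacle is verifying that the T-shapes are noncrossing and that two T-shapes are tangent if and only if the corresponding vertices are adjacent in $G^+$. This follows from two structural properties of realizers: along each $T_i$-path from a vertex toward the root, the corresponding coordinate $f_i$ changes monotonically, so the three tree edges at $v$ reach exactly the three intended neighbors at the three ends of $T_v$; and the three regions $R_i(v)$ partition the plane outside $T_v$ consistently with the grid layout, ruling out both crossings and unintended contacts between T-shapes of non-adjacent vertices. Since Schnyder realizers can be computed in linear time from a canonical ordering of the triangulation, and the face counts $f_i$ can be accumulated by a single bottom-up traversal of each tree, the entire construction runs in linear time, completing the proof.
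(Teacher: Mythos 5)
The paper does not actually prove this lemma: it is quoted from de~Fraysseix, Ossona de~Mendez, and Rosenstiehl \cite{FMR94}, with the remark that their argument is based on the canonical ordering of \cite{FPP89}. Your route via Schnyder realizers is therefore genuinely different from the cited one, and it is a known, viable alternative: the three outgoing edges of each interior vertex naturally correspond to the three ends of its $T$-shape, while incoming edges correspond to other $T$-shapes' ends landing on its leg or hat. As a trade-off, the canonical-ordering proof builds the representation incrementally and gets the grid bound and the linear running time almost for free, whereas the realizer approach gives a clean static description but pushes all the difficulty into the verification that the extended segments meet exactly the intended targets.

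That verification is where your write-up falls short of a proof. The assertion that the leg of $T_v$, extended toward its $T_3$-parent $p$, actually reaches the hat of $T_p$ within that hat's horizontal span (and likewise for the two hat ends and the parents' legs), and that no unintended contacts or crossings occur, \emph{is} the content of the lemma; invoking ``two structural properties of realizers'' without stating and applying the precise region-nesting inequalities leaves the main step unproved. Moreover, your stated conventions are internally inconsistent: if $u\to p$ in $T_1$ then $R_1(p)\supsetneq R_1(u)$, so with $(x,y)=(f_1,f_3)$ the $T_1$-parent lies to the \emph{right} and $a_3$ ends up at the \emph{top}, so the leg and the left hat end as you describe them would extend away from their targets (fixable by flipping signs, but it signals that the geometry was not checked). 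Two further, more minor gaps: the face-counting coordinates live in a $(2n-5)\times(2n-5)$ grid, and ``rescaling'' does not preserve integrality --- you need an order-preserving compaction onto the at most $n$ distinct $x$- and $y$-values actually used; and the $\epsilon$-perturbation that removes tangencies for the added triangulation edges moves the perturbed ends off the grid entirely, so for non-triangulated inputs the $n\times n$ grid claim needs an extra argument (e.g., refine the grid by a constant factor, retract by one unit, then compact).
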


The proof of the lemma is based on the canonical ordering of the vertices of a planar graph, introduced in \cite{FPP89}.

\begin{proof}[Proof of Theorem \ref{onebend}]
Consider a representation of $G$ by $T$-shapes satisfying the conditions in the lemma. See Figure \ref{fig:tshapesleft}. For any $v\in V(G)$, let $T_v$ denote the corresponding $T$-shape. We define a drawing of $G$, in which the vertex $v$ is mapped to the center of $T_v$. To simplify the presentation, the center of $T_v$ is also denoted by $v$. For any $uv\in E(G)$, let $p_{uv}$ denote the point of tangency of $T_u$ and $T_v$. The polygonal path $up_{uv}v$ consists of a horizontal and a vertical segment, and these paths together almost form a drawing of $G$ with one bend per edge, using segments of two different slopes. The only problem is that these paths partially overlap in the neighborhoods of their endpoints. Therefore, we modify them by replacing their horizontal and vertical pieces by almost horizontal and almost vertical ones, as follows.

For any $1\le i\le d$, let $\alpha_i$ denote the slope of the (almost horizontal) line connecting the origin $(0,0)$ to the point $(2in,-1)$. Analogously, let $\beta_i$ denote the slope of the (almost vertical) line passing through $(0,0)$ and $(1,2in)$.

Fix a $T$-shape $T_v$ in the representation of $G$. It is tangent to at most $d$ other $T$-shapes. Starting at its center $v$, let us pass around $T_v$ in the counterclockwise direction, so that we first visit the upper left side of its hat, then its lower left side, then the left side and right side of its leg, etc. We number the points of tangencies along $T_v$ in this order. (Note that there are no points of tangencies on the lower side of the hat.)

Suppose now that the hat of a $T$-shape $T_u$ is tangent to the leg of $T_v$, and let $p_{uv}$ be their point of tangency. Assume that $p_{uv}$ was the number $i$ point of tangency along $T_u$ and the number $j$ point of tangency along $T_v$. Let $p'_{uv}$ denote the unique point of intersection of the (almost horizontal) line through $u$ with slope $\alpha_i$ and the (almost vertical) line through $v$ with slope $\beta_j$. In our drawing of $G$, the edge $uv$ will be represented by the polygonal path $up'_{uv}v$. See Figure \ref{fig:tshapesright} for the resulting drawing and Figure \ref{fig:tshapesmiddle} for a version distorted for the human eye to show the underlying structure.

\begin{figure}[ht]
\centering
  \subfigure[]
{		\includegraphics[scale=0.4]{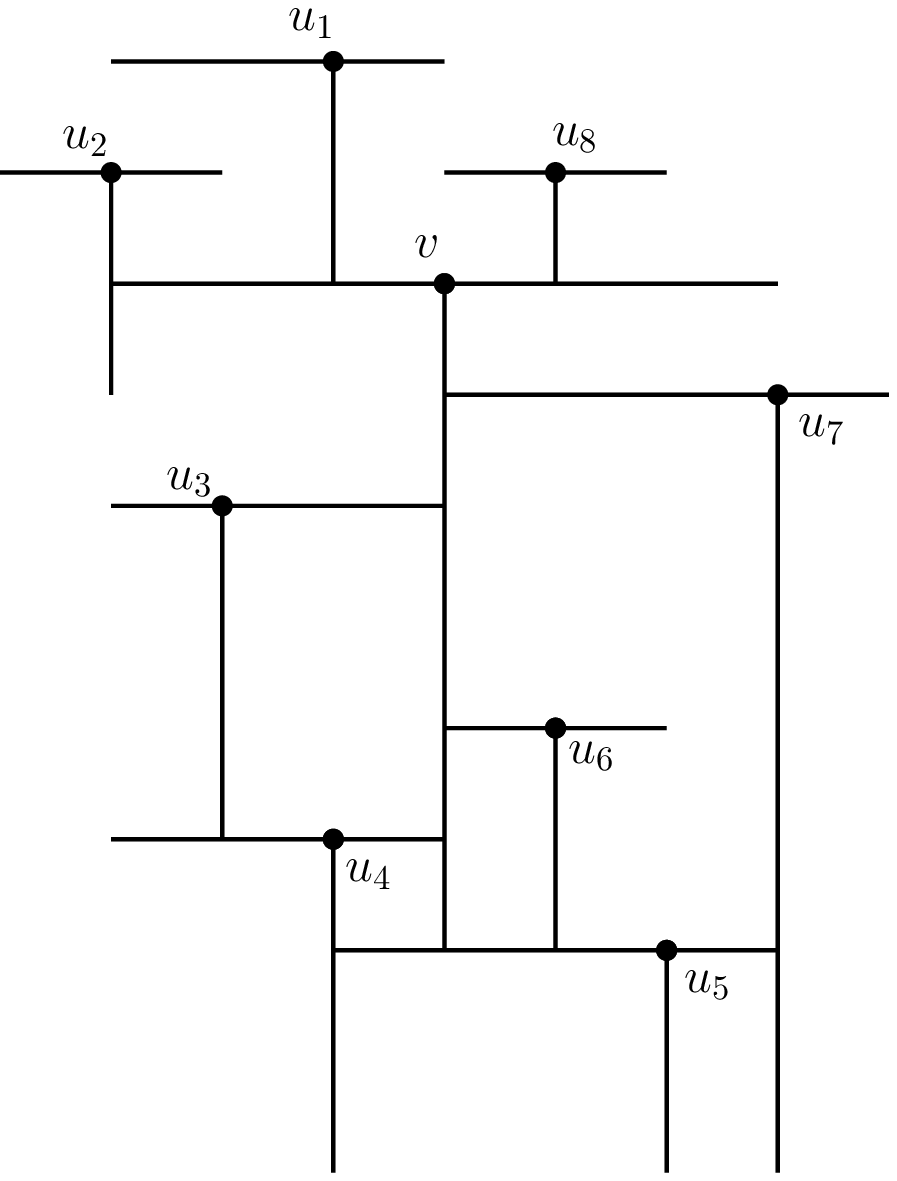}
		\hspace{1cm}
				\label{fig:tshapesleft}
	}
	\subfigure[]
{
		\includegraphics[scale=0.5]{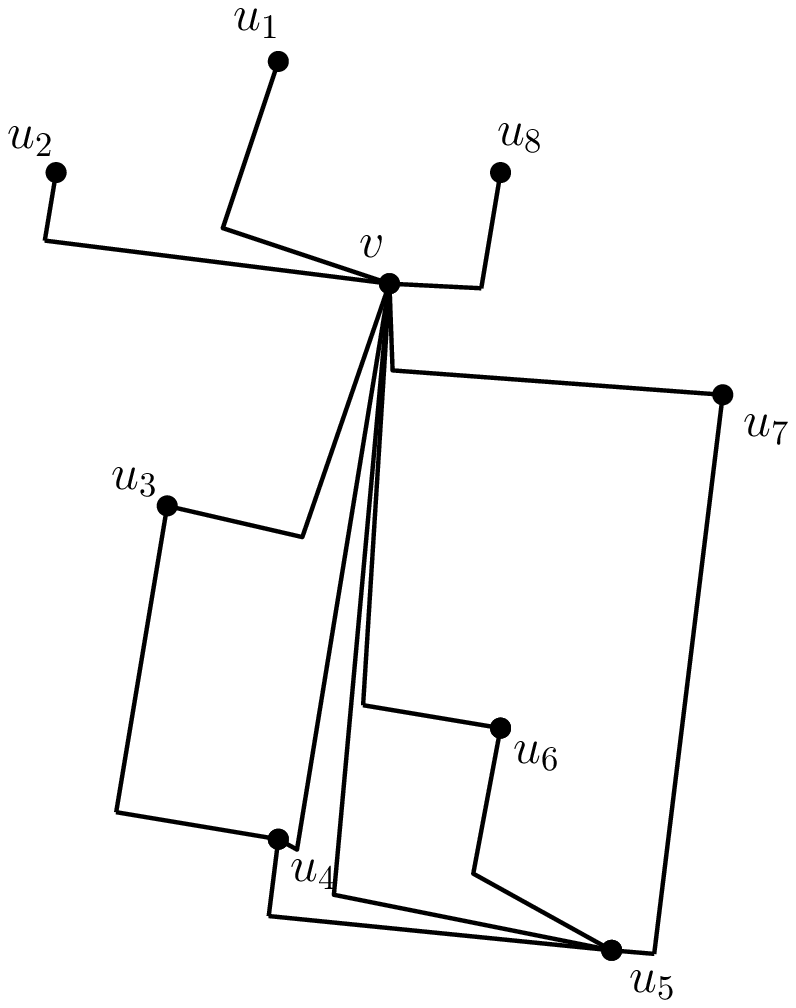}
		\hspace{1cm}
				\label{fig:tshapesmiddle}
		}	
  \subfigure[]
{\includegraphics[scale=0.5]{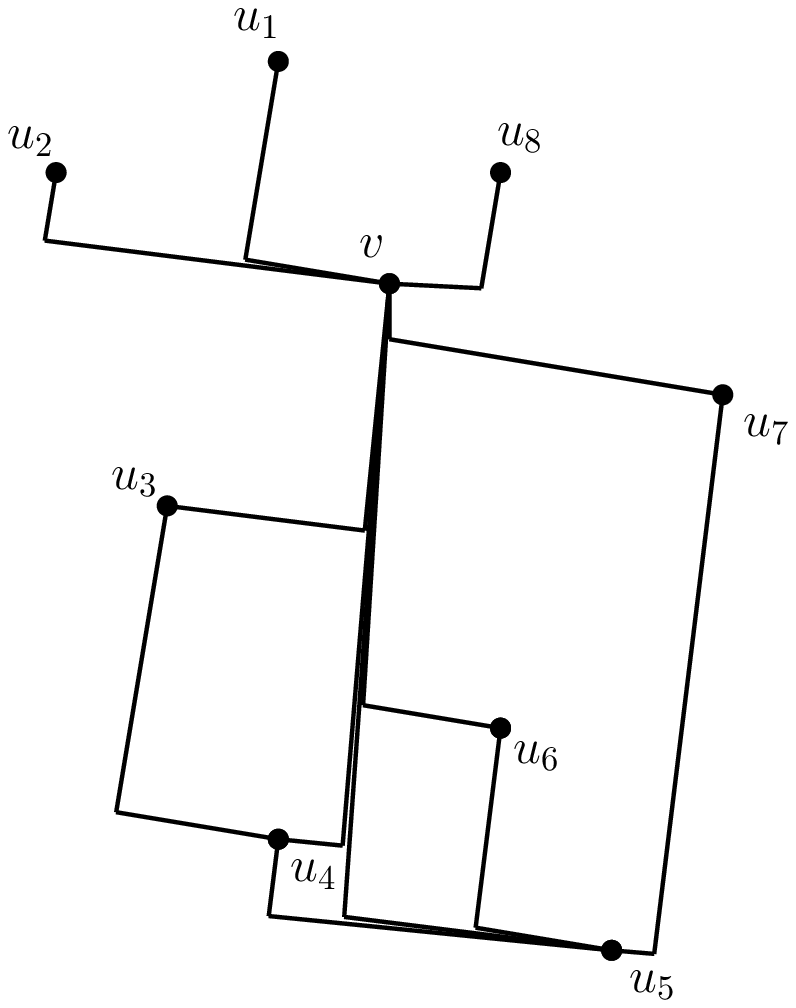}
               \hspace{5mm}
		 		\label{fig:tshapesright}
		 }
		 \caption{Representation with $T$-shapes and the drawing with one bend per edge}
		\label{fig:tshapes}
\end{figure}

Since the segments we used are almost horizontal or vertical, the modified edges $up'_{uv}v$ are very close (within distance $1/2$) of the original polygonal paths $up_{uv}v$. Thus, no two nonadjacent edges can cross each other. On the other hand, the order in which we picked the slopes around each $v$ guarantees that no two edges incident to $v$ will cross or overlap. This completes the proof.
\end{proof}

\section{Two bends per edge--Proof of Theorem~\ref{twobendsimproved}}

In this section, we draw the edges of a planar graph by polygonal paths with at most {\em two} bends. Our aim is to establish Theorem~\ref{twobendsimproved}.

Note that the statement is trivially true for $d=1$ and is false for $d=2$. It is sufficient to prove Theorem~\ref{twobendsimproved} for even values of $d$. For $d=4$, the assertion was first proved by Liu et al. \cite{LMS91} and later, independently, by Biedl and Kant \cite{BK98} (also that the only exception is the octahedral graph). The latter approach is based on the notion of $st$-ordering of biconnected ($2$-connected) graphs from Lempel et al. \cite{LEC67}. We will show that this method generalizes to higher values of $d\ge 5$. As it is sufficient to prove the statement for even values of $d$, from now on we suppose that $d\ge 6$ even. 
We will argue that it is enough to consider {\em biconnected} graphs. Then we review some crucial claims from \cite{BK98} that will enable us to complete the proof. We start with some notation.

Take $d\ge 5$ lines that can be obtained from a vertical line by clockwise rotation by $0, \pi/d, 2\pi/d,$ $\ldots, (d-1)\pi/d$ degrees. Their slopes are called the $d$ {\em regular slopes}. We will use these slopes to draw $G$. Since these slopes depend only on $d$ and not on $G$, it is enough to prove the theorem for connected graphs. If a graph is not connected, its components can be drawn separately. 

In this section we always use the term ``slope'' to mean a regular slope. The {\em  directed slope} of a directed line or segment is defined as the angle (mod $2\pi$) of a clockwise rotation that takes it to a position parallel to the upward directed $y$-axis. Thus, if the directed slopes of two segments differ by $\pi$, then they have the same slope. We say that the slopes of the segments incident to a point $p$ form a {\em contiguous interval} if the set $\vec S\subset \{0, \pi/d, 2\pi/d,\ldots, (2d-1)\pi/d\}$ of directed slopes of the segments directed away from $p$, has the property that for all but at most one $\alpha\in \vec S$, we have that $\alpha +\pi/d \mod 2\pi \in \vec S$ (see Figure \ref{fig:twobendsboth}).

Finally, we say that $G$ admits a {\em good drawing} if $G$ has a planar drawing such that every edge has at most $2$ bends, every segment of every edge has one of the $\lceil d/2 \rceil$ regular slopes, and the slopes of the segments incident to any vertex form a contiguous interval.
If $t$ is a vertex whose degree is at least two but less than $d$, then we can define the two {\em extremal segments} at $t$ as the segments corresponding to the slopes at the two ends of the contiguous interval formed by the slopes of all the segments incident to $t$.
Also define the {\em $t$-wedge} as the infinite cone bounded by the extension of the two extremal segments, which contains all segments incident to $t$ and none of the ``missing'' segments. See Figure \ref{fig:tcone}. For a degree one vertex $t$ we define the {\em $t$-wedge} as the infinite cone bounded by the extension of the rotations of the segment incident to $t$ around $t$ by $\pm\pi/2d$.

\begin{figure}[ht]
\centering
  \subfigure[]
{		\includegraphics[scale=0.4]{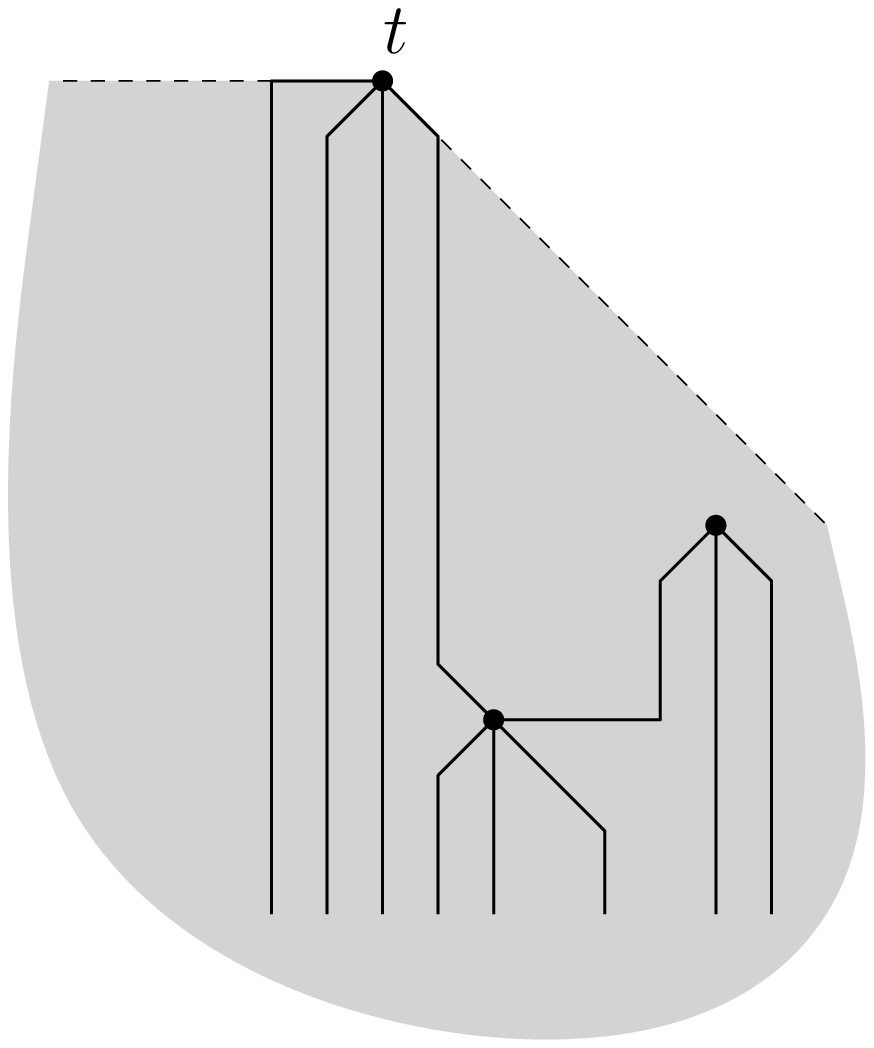}
		\hspace{2cm}
				\label{fig:tcone2}
	}
	\subfigure[]
{
		\includegraphics[scale=0.4]{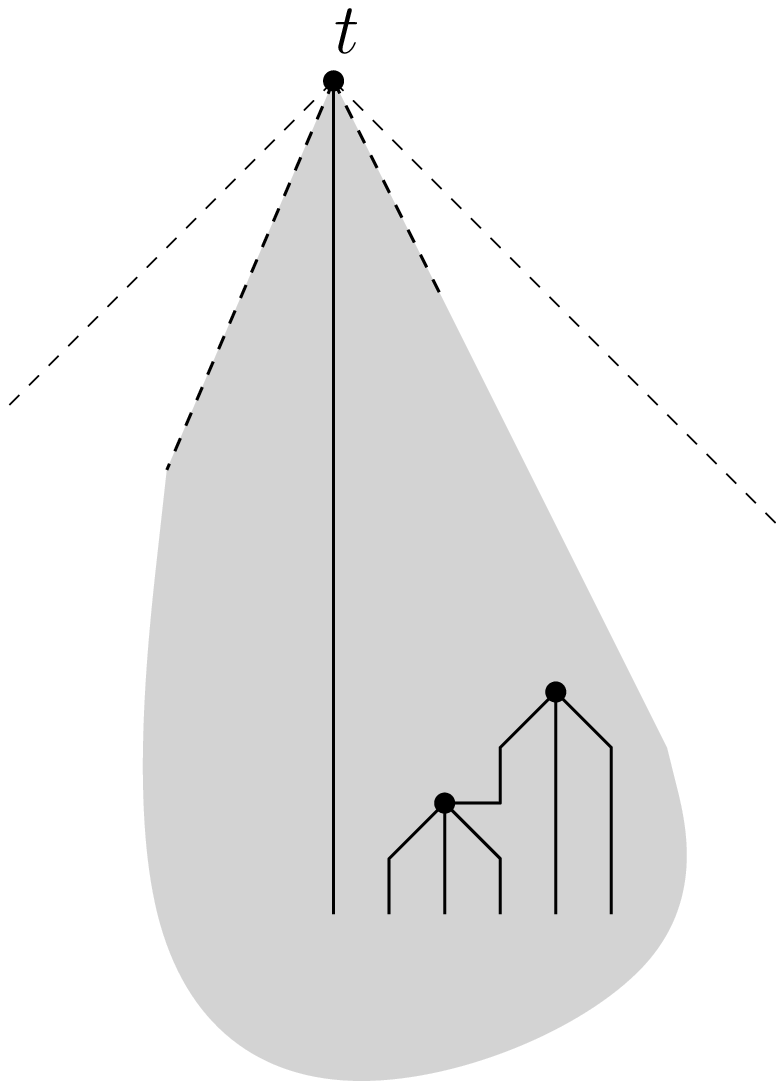}
						\label{fig:tcone1}
		}	
		
		 \caption{The $t$-wedge}
		 		\label{fig:tcone}
\end{figure}

To prove Theorem~\ref{twobendsimproved}, we show by induction that every connected planar graph with maximum degree $d\ge 6$ with an arbitrary $t$ vertex whose degree is strictly less than $d$ admits a good drawing that is contained in the $t$-wedge. Note that such a vertex always exist because of Euler's polyhedral formula, thus Theorem \ref{twobends} is indeed a direct consequence of this statement. First we show how the induction step goes for graphs that have a cut vertex, then (after a lot of definitions) we prove the statement also for biconnected graphs (without the induction hypothesis).

\begin{lemma}\label{cutclaim} Let $G$ be a connected planar graph of maximum degree $d$, let $t\in V(G)$ be a vertex whose degree is strictly smaller than $d$, and let $v\in V(G)$ be a cut vertex. Suppose that for any connected planar graph $G'$ of maximum degree $d$, which has fewer than $|V(G)|$ vertices, and for any vertex $t'\in V(G')$ whose degree is strictly smaller than $d$, there is a good drawing of $G'$ that is contained in the $t'$-wedge. Then $G$ also admits a good drawing that is contained in the $t$-wedge.
\end{lemma}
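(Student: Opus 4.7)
My plan is to split $G$ at the cut vertex $v$, apply the inductive hypothesis to each piece, and glue the resulting good drawings together at $v$. Let $C_1,\ldots,C_k$ (with $k\ge 2$) be the connected components of $G-v$, and let $G_i$ be the subgraph of $G$ induced by $\{v\}\cup C_i$; every $G_i$ is connected, planar, has maximum degree at most $d$, and strictly fewer vertices than $G$. Consider first the case $t\ne v$, and assume $t\in V(G_1)$ without loss of generality. Every edge of $G$ incident to $t$ lies in $G_1$, so $\deg_{G_1}(t)=\deg_G(t)<d$, and the inductive hypothesis applied to $G_1$ with special vertex $t$ yields a good drawing $\mathcal{D}_1$ contained in the $t$-wedge. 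For each $i\ge 2$, the other $k-1$ components each contribute at least one edge to $v$, so $\deg_{G_i}(v)\le\deg_G(v)-(k-1)<d$, and the hypothesis applied to $G_i$ with special vertex $v$ yields a good drawing $\mathcal{D}_i$ contained in its $v$-wedge. The case $t=v$ is handled identically except that $G_1$ plays no distinguished role: the hypothesis is applied to every $G_i$ with special vertex $t=v$.

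Next I glue the drawings at $v$. Let $\vec{S}_1\subset\{0,\pi/d,\ldots,(2d-1)\pi/d\}$ be the contiguous interval of directed slopes used at $v$ by $\mathcal{D}_1$, of length $\deg_{G_1}(v)<d$; extend it by adjacent directed regular slopes into a contiguous arc of total length $\deg_G(v)\le d$, and partition the added portion into consecutive sub-arcs $\vec{I}_2,\ldots,\vec{I}_k$ with $|\vec{I}_i|=\deg_{G_i}(v)$. For each $i\ge 2$, rotate $\mathcal{D}_i$ about $v$ by the unique multiple of $\pi/d$ that sends the directed slopes of $\mathcal{D}_i$ at $v$ onto $\vec{I}_i$; such a rotation preserves the set of regular slopes as well as the contiguous-interval property at every vertex. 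Then scale the rotated $\mathcal{D}_i$ about $v$ by a positive factor small enough that it lies inside an open disk $B$ around $v$ which meets $\mathcal{D}_1$ only in partial segments incident to $v$. Because each $\mathcal{D}_i$ is confined to its rotated $v$-wedge, and the rotated $v$-wedges corresponding to $\vec{I}_2,\ldots,\vec{I}_k$ together with the cone spanned by $\vec{S}_1$ are pairwise disjoint cones at $v$, the union of all drawings is noncrossing.

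The combined drawing is a good drawing of $G$: every segment carries a regular slope; the slopes at $v$ form the contiguous interval $\vec{S}_1\cup\vec{I}_2\cup\cdots\cup\vec{I}_k$ by construction; and at every other vertex the slopes are inherited from a single $\mathcal{D}_i$, hence already contiguous. The drawing still lies inside the $t$-wedge, because $\mathcal{D}_1$ does and $v$ is strictly interior to the $t$-wedge, so the scaled additions around $v$ cannot escape it. The main subtlety is verifying that aligning the directed slopes of $\mathcal{D}_i$ at $v$ with $\vec{I}_i$ automatically places the entire $v$-wedge of $\mathcal{D}_i$ into the correct angular sector around $v$; this follows because the $v$-wedge of a good drawing is the cone bounded by its extremal directed slopes, so equality of directed-slope intervals forces equality of the wedges, and the cyclic arrangement $\vec{S}_1,\vec{I}_2,\ldots,\vec{I}_k$ was chosen precisely so that these wedges are pairwise disjoint.
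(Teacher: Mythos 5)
Your proof is correct and follows essentially the same approach as the paper: split $G$ at the cut vertex $v$ into the subgraphs induced by $\{v\}$ together with each component of $G-v$, apply the induction hypothesis to each piece (with special vertex $t$ for the piece containing $t$, and $v$ for the others), then rotate, scale down, and glue the drawings at $v$ so that the directed slopes there form a contiguous interval. Your write-up just spells out the degree bookkeeping and the wedge-disjointness argument in more detail than the paper does.
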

\begin{proof} Let $G_1, G_2,\ldots$ denote the connected components of the graph obtained from $G$ after the removal of the cut vertex $v$, and let $G^*_i$ be the subgraph of $G$ induced by $V(G_i)\cup\{v\}$.

If $t=v$ is a cut vertex, then by the induction hypothesis each $G_i^*$ has a good drawing in the $v$-wedge\footnote{Of course the $v$-wedges for the different components are different.}. After performing a suitable rotation for each of these drawings, and identifying their vertices corresponding to $v$, the lemma follows because the slopes of the segments incident to $v$ form a contiguous interval in each component.

If $t\ne v$, then let $G_j$ be the component containing $t$. Using the induction hypothesis, $G_j^*$ has a good drawing. Also, each $G_i^*$ for $i\ge 2$ has a good drawing in the $v$-wedge. As in the previous case, the lemma follows by rotating and possibly scaling down the components for $i\ne j$ and again identifying the vertices corresponding to $v$.
\end{proof}

In view of Lemma~\ref{cutclaim}, in the sequel we consider only biconnected graphs. We need the following definition.

\begin{defi} An ordering of the vertices of a graph, $v_1, v_2,\ldots,v_n$, is said to be an {\em $st$-ordering} if $v_1=s$, $v_n=t$, and if for every $1<i<n$ the vertex $v_i$ has at least one neighbor that precedes it and a neighbor that follows it.
\end{defi}

In \cite{LEC67}, it was shown that any biconnected graph has an $st$-ordering, for any choice of the vertices $s$ and $t$. In \cite{BK98}, this result was slightly strengthened for planar graphs, as follows.

\begin{lemma}\label{stlemma} {\rm (Biedl-Kant)} Let $D_G$ be a drawing of a biconnected planar graph, $G$, with vertices $s$ and $t$ on the outer face. Then $G$ has an $st$-ordering for which $s=v_1$, $t=v_n$ and $v_2$ is also a vertex of the outer face and $v_1v_2$ is an edge of the outer face.
\end{lemma}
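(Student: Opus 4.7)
The plan is to combine the classical Lempel--Even--Cederbaum theorem with the planar structure furnished by the drawing $D_G$. Recall that LEC guarantees every biconnected graph admits an $st$-ordering for any prescribed $s$ and $t$ --- equivalently, a bipolar orientation (an acyclic orientation with $s$ the unique source and $t$ the unique sink), whose topological sort is an $st$-ordering. Our extra task is to arrange, using planarity, that a specific outer-face neighbour $v_2$ of $s$ (joined to $s$ by an edge of the outer face) ends up in position~$2$.

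My approach is to exhibit a bipolar orientation $\vec G$ in which every edge at $v_2$ other than $sv_2$ points away from $v_2$; then $v_2$'s unique in-neighbour is $s$, so after placing $s$ first in the topological sort, $v_2$ is the unique source of the remaining DAG and can legally be placed second. To build such an orientation, I would exploit the fact that in a planar bipolar orientation the edges around each vertex split, in cyclic rotation, into one contiguous arc of in-edges and one contiguous arc of out-edges. Because $sv_2$ is an outer-face edge, there is enough combinatorial room to prescribe that at $v_2$ the in-arc consists solely of the slot of $sv_2$, and to extend this local choice to a consistent global orientation --- either by adapting the LEC algorithm (e.g., running its DFS-based form on $G$ together with the augmenting edge $st$, with the orientation at $v_2$ forced) or by the angle-labelling approach of Rosenstiehl--Tarjan, with $v_2$ chosen as the source of the internal face on the non-outer side of $sv_2$.

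The main obstacle is verifying that the local prescription at $v_2$ really does extend to a globally consistent bipolar orientation. A more elementary alternative is induction on $|V(G)|$: with $v_2$ fixed, remove an outer-face vertex $w \notin \{s, v_2, t\}$ whose deletion preserves biconnectivity and keeps $s, v_2, t$ on the outer face with $sv_2$ still an outer-face edge (such a $w$ exists when $|V(G)| \ge 4$ by standard ear-decomposition arguments, apart from the degenerate case when the outer face is just the triangle $sv_2t$, which is handled directly). One then applies the inductive hypothesis to $G - w$ and reinserts $w$ at any position strictly between its earliest and latest neighbours in the resulting ordering; biconnectivity of $G$ guarantees that $w$ has at least two neighbours, so such a position exists and does not disturb the role of $v_2$ as the second vertex. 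Either route rests on the same key feature: the combination of planarity with the outer-face adjacency of $sv_2$ provides the combinatorial slack needed to steer the orientation, or to find a removable vertex, without interfering with $s$, $v_2$, or $t$.
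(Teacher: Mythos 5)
The paper does not actually prove this lemma: it is quoted from Biedl and Kant \cite{BK98} without proof, so your proposal has to stand on its own merits. Unfortunately, it does not. Both of your routes stop exactly at the critical step --- you concede this yourself for the bipolar-orientation route, and the ``degenerate case'' you set aside in the inductive route (outer face equal to the triangle $s v_2 t$, with an arbitrary biconnected structure inside) is precisely the hard case, not a degenerate one. Moreover, neither route can be completed, because the statement you are actually aiming at --- that a \emph{prescribed} outer-face neighbour $v_2$ of $s$ can be made second, with the embedding held fixed --- is false. Take $G=K_4$ minus the edge $wt$, on vertices $s,v,w,t$, drawn with the triangle $svt$ as the outer face and $w$ inside it (so $w$ is adjacent only to $s$ and $v$). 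Then $s$ and $t$ lie on the outer face and the only admissible outer-face edge at $s$ is $sv$, yet $s,v,w,t$ is not an $st$-ordering ($w$ has no later neighbour), and the unique $st$-ordering $s,w,v,t$ puts the \emph{interior} vertex $w$ in second place. This breaks the orientation route concretely: since $s$ is the unique source, $sw$ is oriented $s\to w$, so $w$ must send its only other edge into $v$, giving $v$ an in-neighbour different from $s$; so no bipolar orientation has $\{s\}$ as the in-arc of $v$. It equally breaks the reinsertion step of the inductive route: a vertex $w$ whose only neighbours are $s$ and $v_2$ can only be reinserted strictly between positions $1$ and $2$, i.e.\ it displaces $v_2$ from second place, contrary to your claim that the insertion ``does not disturb the role of $v_2$.''

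The same example shows that the lemma is in fact false as literally stated for an arbitrary given drawing $D_G$; in Biedl--Kant the planar embedding is part of the \emph{conclusion}, not the hypothesis (in the example one must re-choose the outer face to be the quadrilateral $swvt$, after which the ordering $s,w,v,t$ satisfies every requirement). So any correct proof must exploit the freedom to modify the embedding, or at least to re-select the outer face among the faces incident to $t$; neither of your routes does this, and no argument that treats the drawing as fixed and prescribes $v_2$ in advance can succeed. The sound core of your proposal --- that a vertex of a biconnected graph can always be inserted into an $st$-ordering of the graph with that vertex deleted, strictly between its earliest and latest neighbours --- is correct and worth keeping, but the steering of $v_2$ into second place has to come from the choice of embedding rather than from local orientation constraints.
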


We define $G_i$ to be the subgraph of $G$ induced by the vertices $v_1, v_2, \ldots, v_i$. Note that $G_i$ is connected. If $i$ is fixed, we call the edges between $V(G_i)$ and $V(G)\setminus V(G_i)$ the {\em pending edges}.
For a drawing of $G$, $D_G$, we denote by $D_{G_i}$ the drawing restricted to $G_i$ and to an initial part of each pending edge connected to $G_i$.

\begin{prop}\label{stremark0}
In the drawing $D_G$ guaranteed by Lemma \ref{stlemma}, $v_{i+1},\ldots v_n$ and the pending edges are in the outer face of $D_{G_i}$.
\end{prop}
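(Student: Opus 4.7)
The plan is to prove the proposition by a purely topological argument: I would show that the part of $D_G$ outside $D_{G_i}$ is a connected subset of the plane, disjoint from $D_{G_i}$, so it lies in a single face of $D_{G_i}$, and then identify that face as the outer face by locating $t = v_n$ inside it. Concretely, let $S$ denote the union of the vertices $v_{i+1},\ldots,v_n$, the edges of $G$ among them, and the terminal portions of the pending edges (the pieces beyond the initial parts included in $D_{G_i}$). Then $S$ is, by definition, disjoint from $D_{G_i}$, so it lies in the union of the open faces of $D_{G_i}$; the proposition says $S$ is contained in the outer face.

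First I would verify that the induced subgraph $G[\{v_{i+1},\ldots,v_n\}]$ is connected. For any $j$ with $i < j < n$, the $st$-ordering property guarantees a neighbor $v_{j'}$ of $v_j$ with $j' > j$; iterating this yields a strictly increasing sequence of indices that must eventually reach $n$, producing a path in $G[\{v_{i+1},\ldots,v_n\}]$ from $v_j$ to $v_n$. Hence the drawn subgraph $G[\{v_{i+1},\ldots,v_n\}]$ is a connected subset of the plane, and the terminal portions of the pending edges each attach to some $v_j$ with $j>i$, so the whole set $S$ is connected. Being connected and disjoint from $D_{G_i}$, $S$ lies in a single face of $D_{G_i}$.

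It remains to identify that face as the outer one. For this, I would argue that $v_n$ lies in the outer face of $D_{G_i}$. By Lemma~\ref{stlemma} the vertex $v_n = t$ is on the outer face of $D_G$. Passing from $D_G$ to $D_{G_i}$ only removes vertices/edges (and truncates pending edges), which merges faces; in particular the outer face of $D_G$ is contained in the outer face of $D_{G_i}$. A sufficiently small disk around $v_n$ in $D_{G_i}$ contains no drawn edges (all edges incident to $v_n$ are either among $\{v_{i+1},\ldots,v_n\}$ and therefore deleted, or pending edges whose initial parts lie near their $V(G_i)$-endpoints), so $v_n$ is an interior point of a face of $D_{G_i}$, and that face necessarily coincides with the outer face. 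Since $v_n \in S$, connectedness then gives $S \subseteq $ outer face of $D_{G_i}$.

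The main technical subtlety is the last step: ensuring that $v_n$, originally a boundary vertex of the outer face of $D_G$, actually lies in the \emph{interior} of the outer face of $D_{G_i}$. This depends on the convention that the ``initial parts'' of the pending edges drawn in $D_{G_i}$ are chosen short enough to stay well away from the vertices $v_j$ with $j > i$; with this mild assumption, the local analysis near $v_n$ goes through and the argument is complete.
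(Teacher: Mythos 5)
Your argument is correct and is essentially the paper's own proof: both rest on the two facts that the $st$-ordering yields a path from any $v_j$ ($j>i$) to $v_n$ avoiding $V(G_i)$, and that $v_n$ lies in the outer face of $D_{G_i}$ because it is on the outer face of $D_G$. You phrase it positively via connectedness of the removed part rather than by contradiction, and you spell out the minor point that $v_n$ sits in the interior of the outer face, but the substance is identical.
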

\begin{proof}
Suppose for contradiction that for some $i$ and $j>i$, $v_j$ is not in the outer face of $D_{G_i}$. We know that $v_n$ is in the outer face of $D_{G_i}$ as it is on the outer face of $D_G$, thus $v_n$ and $v_j$ are in different faces of $D_{G_i}$. On the other hand, by the definition of $st$-ordering, there is a path in $G$ between $v_j$ and $v_n$ using only vertices from $V(G)\setminus V(G_i)$. The drawing of this path in $D_G$ must lie completely in one face of $D_{G_i}$. Thus, $v_j$ and $v_n$ must also lie in the same face, a contradiction.
Since the pending edges connect $V(G_i)$ and $V(G)\setminus V(G_i)$, they must also lie in the outer face.
\end{proof}

\begin{figure}[ht]
\centering
  \subfigure[The pending-order of the pending edges in $D_{G_i}$]
{		\includegraphics[scale=0.7]{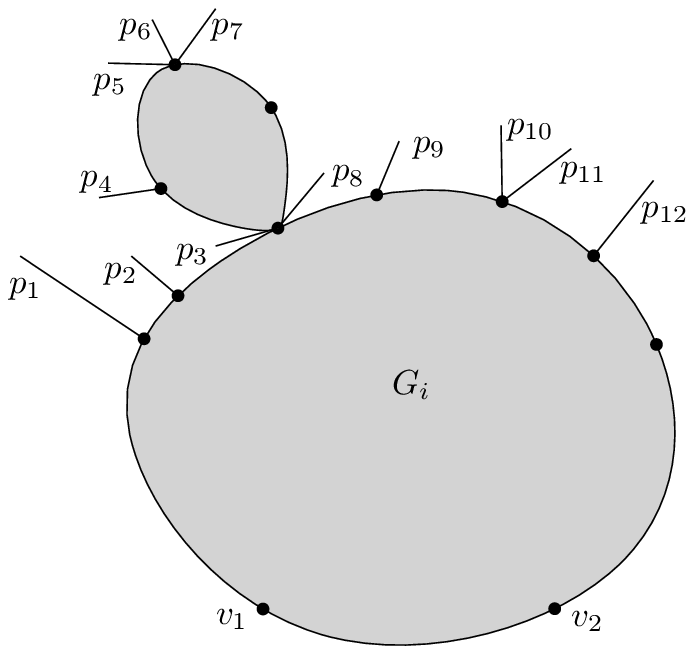}
\hspace{5mm}
				\label{fig:storder_a}
	}
       \hspace{2cm}
  \subfigure[The preceding neighbors of $v_{i+1}$ are consecutive in the pending-order]
{		
\includegraphics[scale=0.7]{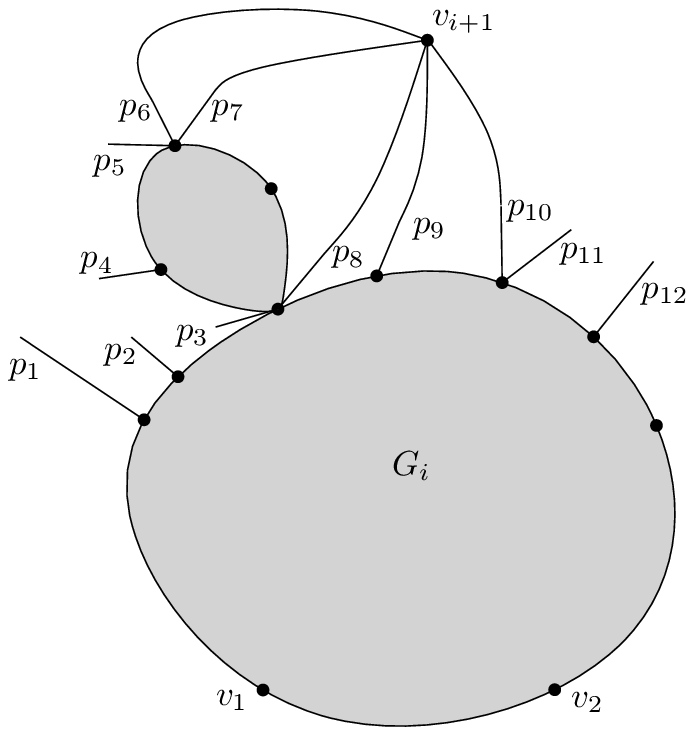}
\hspace{5mm}
				\label{fig:storder_b}
	}		 
		 \caption{Properties of the $st$-ordering}
\end{figure}

By Lemma \ref{stlemma}, the edge $v_1v_2$ lies on the boundary of the outer face of $D_{G_i}$, for any $i\ge 2$.
Thus, we can order the pending edges connecting $V(G_i)$ and $V(G)\setminus V(G_i)$ by walking in $D_G$ from $v_1$ to $v_2$ around $D_{G_i}$ on the side that does not consist of only the $v_1v_2$ edge, see Figure \ref{fig:storder_a}. We call this the {\em pending-order} of the pending edges between $V(G_i)$ and $V(G)\setminus V(G_i)$ (this order may depend on $D_G$). Proposition \ref{stremark0} implies 

\begin{prop}\label{stremark}
The edges connecting $v_{i+1}$ to vertices preceding it form an interval of consecutive elements in the pending-order of the edges between $V(G_i)$ and $V(G)\setminus V(G_i)$.
\end{prop}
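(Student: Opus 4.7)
The plan is to prove this by contradiction, leveraging Proposition~\ref{stremark0} at the next index $i+1$. Suppose that the edges connecting $v_{i+1}$ to its preceding neighbors do not form a contiguous interval in the pending-order of the edges between $V(G_i)$ and $V(G)\setminus V(G_i)$. Then I can pick three pending edges $e_a,e_c,e_b$ appearing in this order (so $a<c<b$), with $e_a$ and $e_b$ both incident to $v_{i+1}$ and $e_c$ incident to some $v_\ell$ with $\ell>i+1$. Let $u_a,u_c,u_b\in V(G_i)$ denote the $G_i$-endpoints of $e_a,e_c,e_b$, respectively; because $G$ is simple, the edges $e_a,e_b$ leaving $v_{i+1}$ must have distinct feet, so $u_a\neq u_b$.

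Next, I would construct a Jordan curve $C$ by concatenating $e_a$, the portion $P$ of the outer-face boundary walk of $D_{G_i}$ running from $u_a$ to $u_b$ in the pending-order direction (so that $u_c$ lies on $P$), and $e_b$. Let $R$ denote the open bounded region enclosed by $C$. Since $e_a$ and $e_b$ lie entirely in the outer face of $D_{G_i}$ and form part of $\partial R$, the region $R$ sits on the outer-face side of $P$; in particular $R$ is contained in the outer face of $D_{G_i}$. Locally at $u_c$, the pending arc $e_c$ exits $P$ on the same (outer-face) side, hence enters $R$. By planarity of $D_G$, the arc $e_c$ cannot meet $C$ again, so it is trapped in $R$ all the way to its other endpoint, giving $v_\ell\in R$.

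Finally, applying Proposition~\ref{stremark0} with $i+1$ in place of $i$, the point $v_\ell$ (whose index exceeds $i+1$) must lie in the outer face of $D_{G_{i+1}}$, i.e.\ in the unbounded component of the plane minus $D_{G_{i+1}}$. But $R$ is bounded, and the extra edges of $D_{G_{i+1}}$ inside $R$ (namely the arcs from $v_{i+1}$ to its other preceding neighbors, plus the initial parts of the pending arcs sitting inside $R$) can only subdivide $R$ into further bounded faces. Hence $v_\ell$ lies in a bounded face of $D_{G_{i+1}}$, a contradiction. The step I expect to require the most care is verifying rigorously that $e_c$ exits $P$ on the $R$-side rather than the opposite side; this should follow from a local analysis at $u_c$ showing that all pending arcs of $D_{G_i}$ leave the boundary walk on a single consistent side, namely into the outer face of $D_{G_i}$, which is precisely the side on which $R$ was built.
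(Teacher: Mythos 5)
The paper offers no argument for this proposition beyond the remark that it ``follows from'' Proposition~\ref{stremark0} together with a figure, so your separation argument is a reasonable attempt to supply the missing details, and its overall shape (trap the far endpoint $v_\ell$ of an interleaving pending edge $e_c$ inside a bounded region whose boundary lies in $D_{G_{i+1}}$, then contradict Proposition~\ref{stremark0} at index $i+1$) is the right one. However, there is a genuine gap exactly at the step you yourself flag, and your proposed fix (a local analysis at $u_c$) cannot close it, because the difficulty is global, not local. The curve $C=e_a\cup P\cup e_b$ separates the plane into two regions, and it is simply not true that the bounded one is the one adjacent to $P$ from the outer-face side. Concretely, if $e_a$ and $e_b$ leave $u_a$ and $u_b$ into the outer face of $D_{G_i}$ but then ``wrap around'' $D_{G_i}$ before meeting at $v_{i+1}$, the bounded region enclosed by $C$ contains the body of $D_{G_i}$ together with the \emph{complementary} boundary arc $P'$, while the outer face of $D_{G_i}$ along $P$ (where $e_c$ exits) lies in the \emph{unbounded} component. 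In that situation every step of your argument after the construction of $R$ breaks down: $R$ is not contained in the outer face of $D_{G_i}$, $e_c$ does not enter $R$, and no contradiction is reached. Your inference ``since $e_a$ and $e_b$ lie in the outer face and form part of $\partial R$, the region $R$ sits on the outer-face side of $P$'' is a non sequitur: both complementary regions of $C$ have $e_a$ and $e_b$ on their boundary.

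What actually rules out the bad configuration is a piece of global information you never invoke: the cut point of the cyclic boundary order, namely the edge $v_1v_2$, lies on $P'$ (because $a<c<b$ holds in the \emph{linear} pending-order), and by Lemma~\ref{stlemma} the edge $v_1v_2$ is an edge of the outer face of the full drawing $D_G$. If the bounded side of $C$ were the one adjacent to $P'$, then $v_1v_2$ would border only bounded faces of $D_{G_{i+1}}$, hence of $D_G$, contradicting its position on the outer face. Only after this step may you conclude that the bounded region is the one adjacent to $P$, so that $e_c$ enters it and your trapping argument applies. Without this, your argument proves only that the pending edges at $v_{i+1}$ are consecutive in the \emph{cyclic} order around the outer boundary of $D_{G_i}$, not in the linear pending-order as the proposition asserts. (A smaller, repairable issue: $P$ is a portion of the outer-face boundary \emph{walk}, which may repeat vertices and edges when $G_i$ has cut vertices, so $C$ need not literally be a Jordan curve; one should push $P$ slightly into the outer face, or argue with faces of the plane graph $G_i\cup\{e_a,e_b,v_{i+1}\}$ instead.)
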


For an illustration see Figure \ref{fig:storder_a}.

Two drawings of the same graph are said to be {\em equivalent} if the circular order of the edges incident to each vertex is the same in both drawings. Note that in this order we also include the pending edges (which are differentiated with respect to their yet not drawn end).

Now we are ready to finish the proof of Theorem \ref{twobends}, as the following lemma is the only missing step.

\begin{figure}[ht]
\centering
  \subfigure[Drawing $v_1$, $v_2$ and the edges incident to them]
  {
		\includegraphics[scale=0.36]{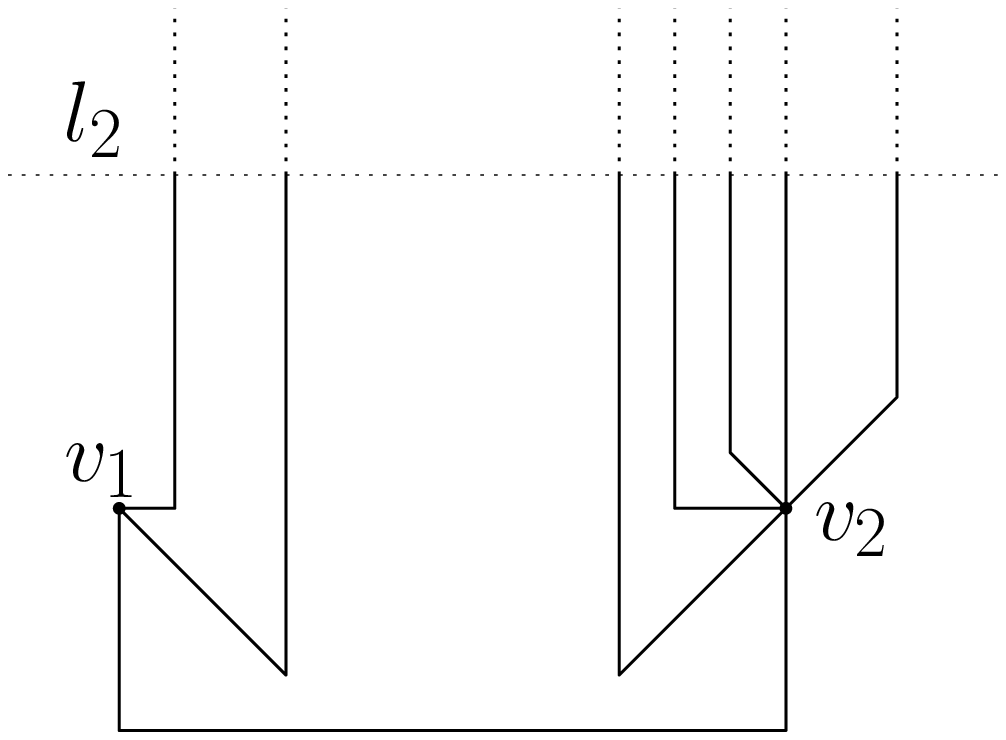}
               \hspace{5mm}
		 \label{fig:twobends}
      }
       \hspace{1cm}
      \subfigure[Adding $v_{i}$; partial edges added in this step are drawn with dashed lines]{
		\includegraphics[scale=0.36]{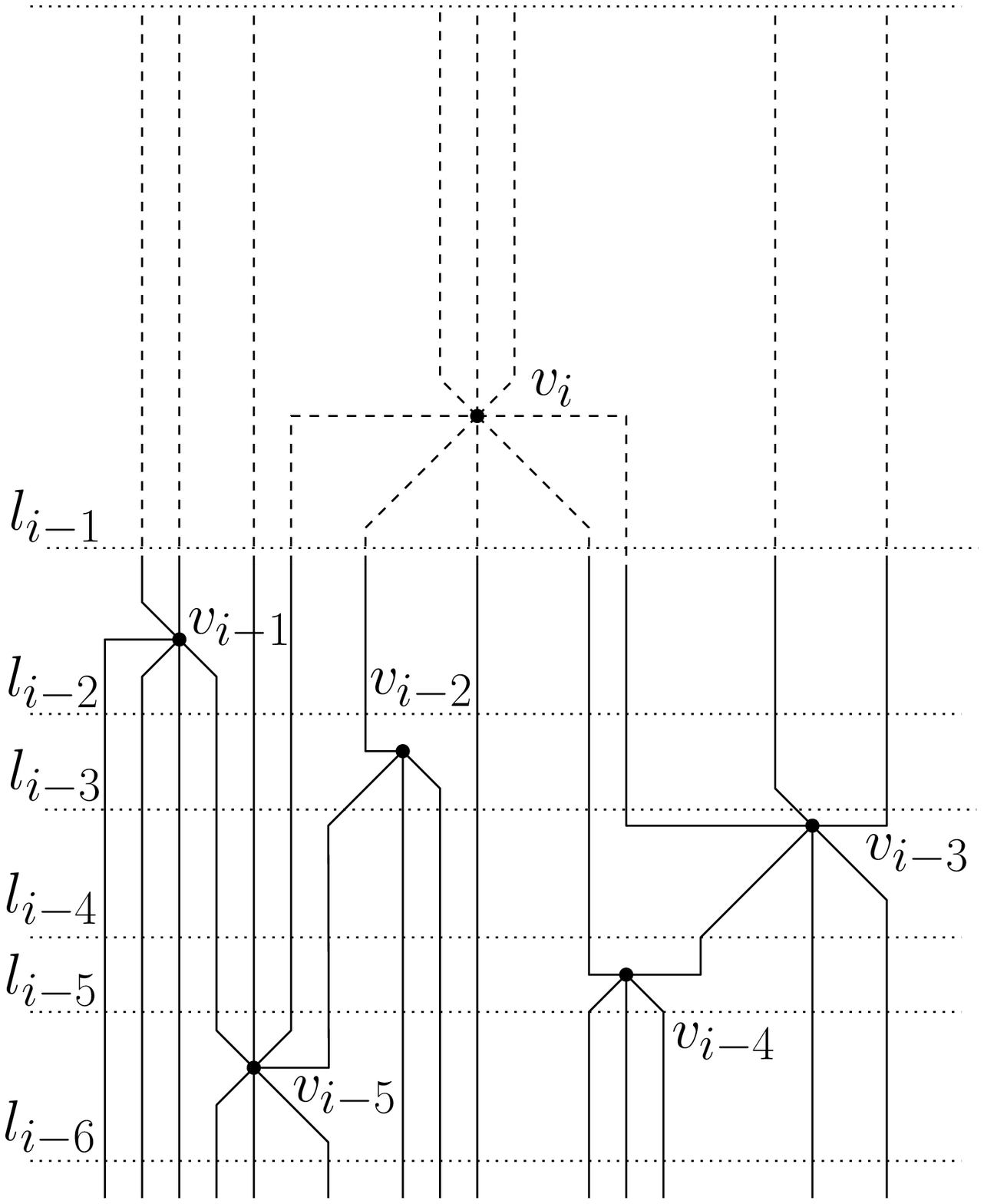}
               \hspace{5mm}
		 \label{fig:twobendsstep}
      }
      \caption{Drawing with at most two bends}
      \label{fig:twobendsboth}
\end{figure}

\begin{lemma}
For any biconnected planar graph $G$ with maximum degree $d\ge 6$ and for any vertex $t\in V(G)$ with degree strictly less then $d$, $G$ admits a good drawing that is contained in the $t$-wedge.
\end{lemma}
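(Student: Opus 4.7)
The plan is to induct on $|V(G)|$, building the drawing one vertex at a time using the $st$-ordering. Since $G$ is biconnected and planar we may choose a planar embedding with $t$ on the outer face, pick any vertex $s \ne t$ on the outer face, and apply Lemma~\ref{stlemma} to get an $st$-ordering $v_1 = s, v_2, \ldots, v_n = t$ with $v_1v_2$ on the outer face. Fix coordinates so that the $t$-wedge opens downward from the eventual position of $t$, and build the drawing inside a bounded sub-wedge by placing $v_1, v_2$ near the bottom and each $v_{i+1}$ strictly above the current partial drawing $D_{G_i}$. Maintain the following invariants on $D_{G_i}$ (together with the initial segment of each pending edge): (I) at every placed vertex, the slopes of all incident segments, including the initial pieces of pending edges, form a contiguous interval; (II) the initial segments of pending edges, read in pending-order, have directed slopes varying monotonically so that they can be extended upward arbitrarily without crossing each other or $D_{G_i}$; (III) everything drawn so far lies inside the $t$-wedge.

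For the inductive step, Proposition~\ref{stremark} says that the preceding neighbors of $v_{i+1}$ form a consecutive block in the pending-order. Extend the initial segments of their pending edges along their current slopes until they cross a common horizontal line well above $D_{G_i}$, then place $v_{i+1}$ high on this line so that the incoming edges can be completed to $v_{i+1}$ using a contiguous block of directed slopes pointing ``downward'' from $v_{i+1}$; one additional bend on each extended pending edge redirects it along the chosen downward slope, giving at most two bends per edge. Start the new pending edges of $v_{i+1}$ (going to its following neighbors) with a contiguous block of directed slopes pointing ``upward'' from $v_{i+1}$, slotted into the pending-order so that invariant (II) is preserved. Since $\deg(v_{i+1}) \le d$ and $\lceil d/2\rceil$ regular slopes provide $2\lceil d/2\rceil$ directed slopes, the directed slopes at $v_{i+1}$ fit into a contiguous interval, giving invariant (I) at $v_{i+1}$; at each predecessor $v_j$ the slope of the just-finalized edge extends $v_j$'s existing interval by the monotonicity of pending slopes in pending-order. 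The final step adds $v_n = t$, whose edges are all incoming; since $\deg(t) < d$ their downward directed slopes form a contiguous interval of length strictly less than $d$ inside the interval defining the $t$-wedge, so the completed drawing lies in the $t$-wedge.

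The main obstacle will be verifying geometric feasibility of this step while keeping all three invariants alive. The crucial point is invariant (II): monotonically sloped pending edges behave like non-crossing upward wires, each of which can be continued to any desired altitude and then bent once toward $v_{i+1}$ at any prescribed downward slope, with no two wires interfering and no interference with $D_{G_i}$. The slope allocation at $v_{i+1}$ splits its contiguous slope interval into a ``lower'' part used by the incoming edges and an ``upper'' part used by the outgoing pending edges; this matches the planar cyclic order because Proposition~\ref{stremark0} ensures that the preceding and following neighbors of $v_{i+1}$ partition its incident edges into two consecutive arcs of the embedding. The hypothesis $d \ge 6$ is used to guarantee enough slack, namely $\lceil d/2 \rceil \ge 3$ slopes, to carry out the slope allocation at every vertex, mirroring the $d=4$ argument of Biedl and Kant and accommodating the single exceptional case (the octahedron) where such a construction is impossible.
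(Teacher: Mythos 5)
Your skeleton is the same as the paper's: fix an embedding with $t$ on the outer face, take the $st$-ordering of Lemma~\ref{stlemma}, add the vertices one by one, and use Proposition~\ref{stremark} to see that the predecessors of $v_{i+1}$ occupy a consecutive block of pending edges. The geometric routing, however, has a fatal gap in your invariant (II). You route each pending edge by extending its \emph{initial} segment along the slope it was assigned at its lower endpoint $v_j$, and you require all these initial segments to point ``upward'' and to have monotonically varying slopes along the pending-order. This is unachievable: a good drawing forces the directed slopes at $v_j$ to form a contiguous interval among the $2\lceil d/2\rceil$ available directed slopes (spaced $2\pi/(2\lceil d/2\rceil)$ apart), so a vertex of degree close to $d$ uses directed slopes spanning nearly the full circle, and some of its pending edges must leave $v_j$ sideways or downward. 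Such a segment cannot be ``extended upward arbitrarily,'' and global monotonicity along the pending-order is likewise incompatible with the contiguous-interval condition at each individual vertex. A second, independent red flag: as described, each edge ends up with only two segments (the extended initial segment plus the final segment into $v_{i+1}$), i.e.\ one bend. A one-bend drawing with $\lceil d/2\rceil$ slopes would contradict the paper's own lower bound in Section~5, which shows that one bend per edge requires at least $\tfrac{3}{4}(d-1)>\lceil d/2\rceil$ slopes for $d\ge 5$. So the scheme cannot be repaired by tightening the bookkeeping; something structural is missing.

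The missing idea is the \emph{vertical middle piece}, which is exactly where the second bend is spent. In the paper's construction every edge consists of three pieces: an arbitrarily short segment at each endpoint, whose only job is to realize the prescribed directed slope and circular order locally at that vertex, and a long \emph{vertical} middle segment. Because all middle pieces are parallel, the pending edges become non-crossing vertical wires whose left-to-right order on a horizontal auxiliary line $l_i$ coincides with the pending-order; they can be prolonged to any height for free, regardless of how their initial slopes point. The new vertex $v_{i+1}$ is then placed directly above the \emph{median} wire of its consecutive block and joined to the tops of the appropriately extended wires by a contiguous fan of downward directed slopes, while its own pending edges start with short segments on the remaining directed slopes and immediately turn vertical. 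Replacing your slope-preserving extension by this short-segment-then-vertical routing (and accepting two bends per edge) is what makes the induction go through.
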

\begin{proof}
Take a planar drawing $D_G$ of $G$ such that $t$ is on the outer face and pick another vertex, $s$, from the outer face. Apply Lemma \ref{stlemma} to obtain an $st$-ordering with $v_1=s, v_2,$ and $v_n=t$ on the outer face of $D_G$ such that $v_1v_2$ is an edge of the outer face. We will build up a good drawing of $G$ by starting with $v_1$ and then adding $v_2,v_3,\ldots, v_n$ one by one to the outer face of the current drawing. As soon as we add a new vertex $v_i$, we also draw the initial pieces of the pending edges, and we make sure that the resulting drawing is equivalent to the drawing $D_{G_i}$.

Another property of the good drawing that we maintain is that every edge consists of precisely three pieces. (Actually, an edge may consist of fewer than 3 segments, because two consecutive pieces are allowed to have the same slope and form a longer segment) The middle piece will always be vertical, except for the middle piece of $v_1v_2$.

Suppose without loss of generality that $v_1$ follows directly after $v_2$ in the clockwise order of the vertices around the outer face of $D_G$. Place $v_1$ and $v_2$ arbitrarily in the plane so that the $x$--coordinate of $v_1$ is smaller than the $x$--coordinate of $v_2$. Connect $v_1$ and $v_2$ by an edge consisting of three segments: the segments incident to $v_1$ and $v_2$ are vertical and lie below them, while the middle segment has an arbitrary non-vertical regular slope.
Draw a horizontal auxiliary line $l_2$ above $v_1$ and $v_2$. Next, draw the initial pieces of the other (pending) edges incident to $v_1$ and $v_2$, as follows. For $i=1,2$, draw a short segment from $v_i$ for each of the edges incident to it (except for the edge $v_1v_2$, which has already been drawn) so that the directed slopes of the edges (including $v_1v_2$) form a contiguous interval and their circular order is the same as in $D_G$. Each of these short segments will be followed by a vertical segment that reaches above $l_2$. These vertical segments will belong to the middle pieces of the corresponding pending edges. Clearly, for a proper choice of the lengths of the short segments, no crossings will be created during this procedure. So far this drawing, including the partially drawn pending edges between $V(G_2)$ and $V(G)\setminus V(G_2)$, will be equivalent to the drawing $D_{G_2}$. As the algorithm progresses, the vertical segments will be further extended above $l_2$, to form the middle segments of the corresponding edges. For an illustration, see Figure \ref{fig:twobends}.

The remaining vertices $v_{i}, i>2$, will be added to the drawing one by one, while maintaining the property that the drawing is equivalent to $D_{G_i}$ and that the pending-order of the actual pending edges 
coincides with the order in which their vertical pieces reach the auxiliary line $l_i$. At the beginning of step $i+1$, these conditions are obviously satisfied. Now we show how to place $v_{i+1}$. 

Consider the set $X$ of intersection points of the vertical (middle) pieces of all pending edges between 
$V(G_i)$ and $V(G)\setminus V(G_i)$ with the auxiliary line $l_i$. By Proposition \ref{stremark}, the intersection points corresponding to the pending edges incident to $v_{i+1}$ must be consecutive elements of $X$. Let $m$ be (one of) the median element(s) of $X$. Place $v_{i+1}$ at a point above $m$, so that the $x$-coordinates of $v_{i+1}$ and $m$ coincide, and connect it to $m$. (In this way, the corresponding edge has only one bend, because its second and third piece are both vertical.) We also connect $v_{i+1}$ to the upper endpoints of the appropriately extended vertical segments passing through the remaining elements of $X$, so that the directed slopes of the segments leaving $v_{i+1}$ form a contiguous interval of regular slopes. For an illustration see Figure \ref{fig:twobendsstep}.
Observe that this step can always be performed, because, by the definition of $st$-orderings, the number of edges leaving $v_{i+1}$ is strictly smaller than $d$. This is not necessarily true in the last step, but then we have $v_n=t$, and we assumed that the degree of $t$ was smaller than $d$.
To complete this step, draw a horizontal auxiliary line $l_{i+1}$ above $v_{i+1}$ and extend the vertical portions of those pending edges between $V(G_i)$ and $V(G)\setminus V(G_i)$ that were not incident to $v_{i+1}$ until they hit the line $l_{i+1}$. (These edges remain pending in the next step.)
Finally, in a small vicinity of $v_{i+1}$, draw as many short segments from $v_{i+1}$ using the remaining directed slopes as many pending edges connect $v_{i+1}$ to $V(G)\setminus V(G_{i+1})$. Make sure that the directed slopes used at $v_{i+1}$ form a contiguous interval and the circular order is the same as in $D_G$. Continue each of these short segments by adding a vertical piece that hits the line $l_{i+1}$. The resulting drawing, including the partially drawn pending edges, is equivalent to $D_{G_{i+1}}$.

In the final step, if we place the auxiliary line $l_{n-1}$ high enough, then the whole drawing will be contained in the $v_n$-wedge and we obtain a drawing that meets the requirements.
\end{proof}

\section{Lower Bounds}
In this section, we construct a sequence of planar graphs, providing a nontrivial lower bound for the planar slope number of bounded degree planar graphs. They also require more than the trivial number ($\lceil d/2\rceil$) slopes, even if we allow one bend per edge. 
Remember that if we allow {\em two} bends per edge, then, by Theorem \ref{twobends}, for all graphs with maximum degree $d\ge 3$, except for the octahedral graph, $\lceil d/2\rceil$ slopes are sufficient, which bound is optimal.

\medskip

\begin{thm} For any $d\ge 3$, there exists a planar graph $G_d$ with maximum degree $d$, whose planar slope number is at least $3d-6$.
In addition, any drawing of $G_d$ with at most one bend per edge requires at least $\frac 34(d-1)$ slopes.
\end{thm}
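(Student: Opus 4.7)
The plan is to exhibit the graphs $G_d$ explicitly and bound their planar slope number (and their one-bend slope number) from below by a local slope-counting argument concentrated around a single high-degree vertex.

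Construction: Take $G_d$ to be a planar triangulation containing a \emph{core} wheel $W_d$, namely a central vertex $v$ of degree exactly $d$ together with its neighbors $u_1,\ldots,u_d$ forming a cycle. To make the counting bite, I would pad each $u_i$ with a small rigid triangulated gadget that raises $\deg(u_i)$ up to $d$ without pushing any vertex above degree $d$, and then close the graph off with a triangulated annulus so that $G_d$ is a triangulation. The gadgets are the key design parameter: they must be ``rigid enough'' that, in any planar straight-line drawing, the edges incident to $u_i$ cannot all be re-routed to reuse the slopes already taken by the spokes $vu_i$.

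For the $3d-6$ straight-line lower bound, consider any straight-line drawing of $G_d$. The $d$ spokes $vu_1,\ldots,vu_d$ leave $v$ in $d$ pairwise distinct directions and therefore contribute $\lceil d/2\rceil$ distinct slopes. Since $G_d$ is a triangulation, each triangular face $vu_iu_{i+1}$ forces the rim edge $u_iu_{i+1}$ to carry a third slope, different from both spoke slopes, and geometrically constrained to lie in the narrow angular wedge at $v$ between the spokes $vu_i$ and $vu_{i+1}$. One then argues that the $d$ rim slopes are in fact pairwise distinct, because consecutive rim edges share the endpoint $u_i$ at which $d$ distinct directions meet; and that each padded gadget at $u_i$ forces still further fresh slopes (disjoint both from the spoke slopes and from rim slopes). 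A careful accounting of these three disjoint groups of forced slopes totals $3d-6$.

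For the one-bend lower bound of $\frac{3}{4}(d-1)$, I would apply the same graph. At $v$, the $d$ incident segments still use $d$ distinct directions; but the single bend on each edge adds a ``free'' slope per edge, so naively the spokes alone require only $\lceil d/2\rceil$ slopes. The constraint that each triangular face at $v$ be bounded by three edges whose six segments do not all collapse onto a small slope set prevents full reuse. A weighted averaging over the $d$ triangles around $v$—weighting each triangle by the number of its segments that must receive a fresh slope—yields the advertised bound $\frac{3}{4}(d-1)$.

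The main obstacle is the slope-counting step rather than the construction: without the rigidity imposed by the padded gadgets, the rim edges and padded edges could in principle be re-routed through slopes already used by the spokes, and the three ``disjoint groups'' argument collapses. Designing the gadgets so that the angular constraints at each $u_i$ propagate cleanly (while still respecting the maximum-degree condition) is the delicate part; in the one-bend case, the additional flexibility from bending must be controlled by the refined weighted count, which is why the resulting bound is weaker than $3d-6$.
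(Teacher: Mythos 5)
There is a genuine gap here, and it lies exactly where you flag it yourself: the slope-counting step is not carried out, and the construction as sketched cannot support it. Concentrating the argument at a single high-degree vertex $v$ is a dead end for a $3d-6$ bound: the $d$ spokes at $v$ force only $\lceil d/2\rceil$ slopes, and nothing in your sketch prevents the rim edges and the gadget edges from reusing those same slopes, since the edges at the $u_i$'s are spread around the full angle $2\pi$ at each $u_i$ and are free to be parallel to spokes or to each other. The assertion that consecutive rim edges must have pairwise distinct slopes ``because $d$ distinct directions meet at $u_i$'' is not a proof, and the ``rigid padded gadgets'' that are supposed to force fresh slopes are left entirely unspecified; they are the whole content of the claimed bound. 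The arithmetic also does not close: $\lceil d/2\rceil$ spoke slopes plus at most $d$ rim slopes leaves roughly $3d/2$ slopes that the unspecified gadgets would have to force, disjointly from everything else.

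The idea you are missing is to distribute the high degree over \emph{three mutually adjacent} vertices $a,b,c$ forming a triangle, and to trap a long cycle $C$ (with $d-3$ edges going to each of $a,b,c$) \emph{inside} that triangle. Then all $3d-6$ edges incident to $\{a,b,c\}$ other than three escaping edges lie inside the triangle, so the directions of the edges at $a$ occupy a wedge equal to the interior angle $\alpha$, and similarly $\beta$ at $b$ and $\gamma$ at $c$. Since $\alpha+\beta+\gamma=\pi$, these three wedges partition the circle of slopes (mod $\pi$) with overlap only at the triangle's sides, so all $3d-6$ edges have pairwise distinct slopes --- a clean global argument with no gadget design needed. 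To guarantee that the cycle is trapped in every planar embedding, one takes two such triangles joined as a prism, each with its own cycle, so that at least one cycle must lie inside its triangle. The one-bend bound follows from the analogous observation for the hexagon $a,\,\mathrm{bend},\,b,\,\mathrm{bend},\,c,\,\mathrm{bend}$: its angle sum is $4\pi$, so the three angles at $a,b,c$ cover each direction at most $4$ times, hence each slope is used by at most $4$ of the $3d-3$ segments incident to $a,b,c$, giving $\tfrac34(d-1)$. Your ``weighted averaging over the $d$ triangles around $v$'' has no counterpart to this angle-sum mechanism and does not yield a bound.
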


\medskip

\begin{figure}[ht]
\centering
  \subfigure[A straight line drawing of $G_6$]{
		\includegraphics[scale=0.5]{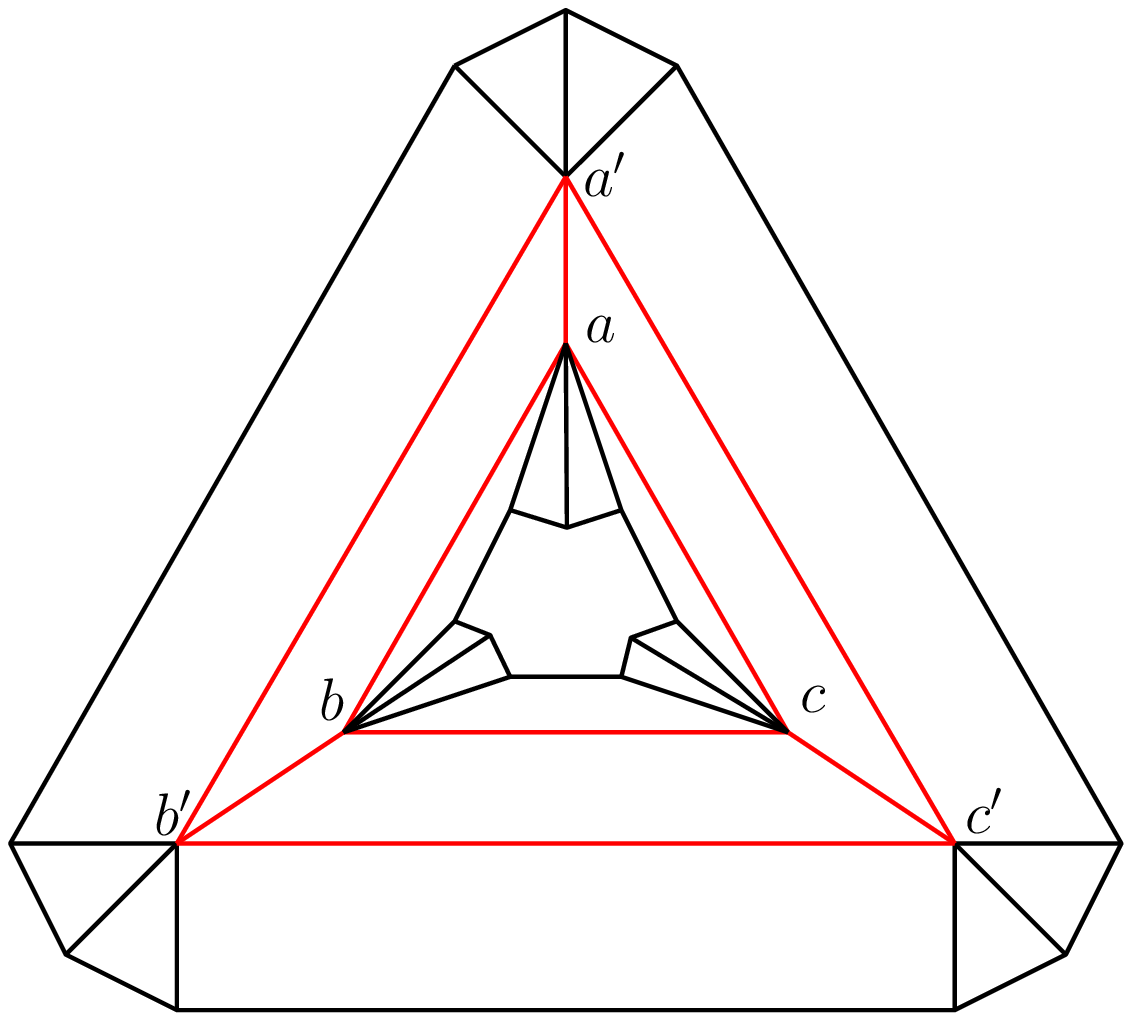}
       \label{fig:counterex}
      }		
	\hspace{2cm}
  \subfigure[At most four segments starting from $a,b,c$ can use the same slope in a drawing of $G_d$ with one bend per edge]
  {
		\includegraphics[scale=0.4]{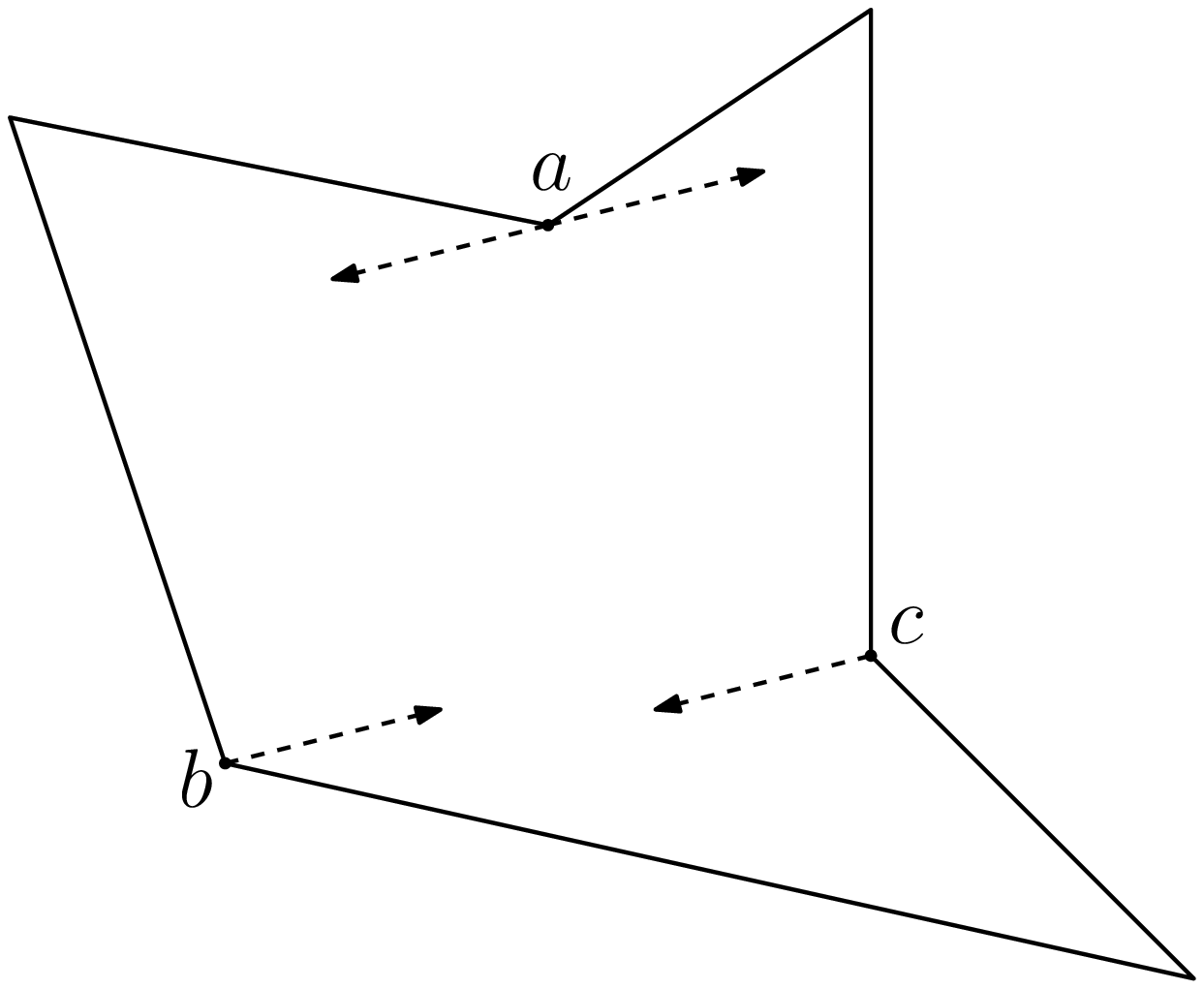}
      \label{fig:counterex1bend}
      }
		  \caption{Lower bounds}
\end{figure}

\begin{proof}
The construction of the graph $G_d$ is as follows. Start with a graph of $6$ vertices, consisting of two triangles, $abc$ and $a'b'c'$, connected by the edges $aa'$, $bb'$, and $cc'$ (see Figure \ref{fig:counterex}). Add to this graph a cycle $C$ of length $3(d-3)$, and connect $d-3$ consecutive vertices of $C$ to $a$, the next $d-3$ of them to $b$, and the remaining $d-3$ to $c$. Analogously, add a cycle $C'$ of length $3(d-3)$, and connect one third of its vertices to $a'$, one third to $b'$, one third to $c'$. In the resulting graph, $G_d$, the maximum degree of the vertices is $d$.

In any crossing-free drawing of $G_d$, either $C$ lies inside the triangle $abc$ or $C'$ lies inside the triangle $a'b'c'$. Assume by symmetry that $C$ lies inside $abc$, as in Figure \ref{fig:counterex}.

If the edges are represented by straight-line segments, the slopes of the edges incident to $a,b,$ and $c$ are all different, except that $aa', bb',$ and $cc'$ may have the same slope as some other edge. Thus, the number of different slopes used by any straight-line drawing of $G_d$ is at least $3d-6$.

Suppose now that the edges of $G_d$ are represented by polygonal paths with at most one bend per edge. Assume, for simplicity, that every edge of the triangle $abc$ is represented by a path with exactly one bend (otherwise, an analogous argument gives an even better result). Consider the $3(d-3)$ polygonal paths connecting $a$, $b$, and $c$ to the vertices of the cycle $C$. Each of these paths has a segment incident to $a$, $b$, or $c$. Let $S$ denote the set of these segments, together with the $6$ segments of the paths representing the edges of the triangle $abc$.

\begin{claim} \label{4slopelemma}
The number of segments in $S$ with any given slope is at most $4$.
\end{claim}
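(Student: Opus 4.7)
Fix a slope $\sigma$ and, for $v\in\{a,b,c\}$, let $k_v$ denote the number of $S$-segments incident to $v$ of slope $\sigma$; set $k=k_a+k_b+k_c=|S_\sigma|$.

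The plan has three layers. First, I would prove the elementary bound $k_v\le 2$: the $d-1$ segments of $S$ incident to $v$ leave $v$ in pairwise distinct directions, and only the two opposite rays of slope $\sigma$ through $v$ can carry slope $\sigma$. This immediately gives $k\le 6$. I would also observe that each of the three triangle edges, having exactly one genuine bend, has its two segments in distinct slopes; hence each triangle edge contributes at most one segment to $S_\sigma$, for a total of at most $3$ from the triangle edges. Let $m_v$ denote the number of these triangle-edge segments at $v$ of slope $\sigma$, so that $m_a+m_b+m_c\le 3$.

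Second, I would use planarity. The three triangle edges bound a simple closed hexagonal curve $\Gamma$ (with $3$ triangle vertices and $3$ bend vertices) whose interior contains $C$ and all $C$-edges. At each $v\in\{a,b,c\}$ the $C$-edges leave $v$ strictly into the \emph{interior cone} at $v$---the angular region on the interior side of $\Gamma$ bounded by the two triangle-edge segments at $v$. Consequently, if two $C$-edges at $v$ carry the two opposite directions of slope $\sigma$, the interior cone at $v$ has angular measure strictly greater than~$\pi$.

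Third, I would project $\Gamma$ onto the direction $\sigma$. At the leftmost (respectively rightmost) vertex of $\Gamma$ in this projection, one of the two opposite slope-$\sigma$ directions points immediately outside $\Gamma$ and therefore cannot be used by any $S$-segment. Whenever an extremal vertex of $\Gamma$ is a triangle vertex $v$, this forces $k_v\le 1$. If both extremal vertices of $\Gamma$ lie in $\{a,b,c\}$, then $k\le 1+1+2=4$ immediately. Otherwise at least one extremal vertex of $\Gamma$ is a bend, and I would finish by combining the triangle-edge bound $m_a+m_b+m_c\le 3$ with the interior-cone condition ($k_v=2$ via $C$-edges forces $\theta_v>\pi$) and the angle-sum identity $\sum\theta=4\pi$ for the hexagon $\Gamma$, via a case analysis that rules out $k=5$ and $k=6$.

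The main obstacle will be this final case analysis, particularly when neither of $a,b,c$ is extremal in the $\sigma$-projection: here the angle-sum bound alone is not enough, and one must further exploit the fact that the cycle $C$ lies in the interior of $\Gamma$ while the outer edges $aa',bb',cc'$ leave into the exterior, together with noncrossing of the $C$-edges, to derive the contradiction.
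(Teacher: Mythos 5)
There is a genuine gap, and you have in fact located it yourself: the ``case analysis that rules out $k=5$ and $k=6$'' is the entire content of the claim, and it is not carried out. Moreover, the toolkit you propose for it is insufficient as assembled. Consider the putative configuration in which each of $a,b,c$ has a reflex interior angle in the hexagon $\Gamma$ whose open angular region contains both directions $\pm\sigma$, so that $k_a=k_b=k_c=2$ via $C$-edges. Your interior-cone condition gives $\theta_a,\theta_b,\theta_c>\pi$, hence $\theta_{p_1}+\theta_{p_2}+\theta_{p_3}<\pi$ by the angle sum $4\pi$ --- consistent, no contradiction. And a vertex whose open angle contains both directions $\pm\sigma$ can never be a \emph{global} extremum of the $\sigma$-projection, so in this configuration both extremal vertices of $\Gamma$ are automatically bend points and your third layer says nothing. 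What is needed to kill this case is a statement about \emph{all local} extrema of the projection, not just the two global ones: for a generic direction a simple closed polygon has exactly two more convex local extrema than reflex ones (the rotation number of $\partial\Gamma$ is $\pm1$), so three reflex extremal vertices would force five convex ones among the three remaining vertices --- impossible. Without some such input the analysis will not close.

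That local fact is exactly what the paper uses, packaged as a single identity: for a simple $k$-gon and any slope $\sigma$, the number of directions among $\{\sigma,\sigma+\pi\}$ lying in the interior angle at $v$, summed over all $k$ vertices and counting a side of slope $\sigma$ as $1/2$ at each of its endpoints, equals exactly $k-2$ (its integral over the space of slopes is the angle sum $(k-2)\pi$, and it is constant in $\sigma$). For the hexagon $\Gamma$ the total coverage is $4$; keeping only every other angle --- those at $a,b,c$ --- and absorbing the half-weights of the sides of slope $\sigma$ that sit at the discarded bend vertices, one gets $k_a+k_b+k_c\le 4$ in one step, since every segment of $S$ at $v\in\{a,b,c\}$ points into, or lies on the boundary of, the interior angle at $v$, in pairwise distinct directions. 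All of your partial observations ($k_v\le2$; extremal triangle vertices have $k_v\le1$; $k_v=2$ via $C$-edges forces $\theta_v>\pi$; $\sum\theta=4\pi$) are consequences of this identity, but their conjunction does not imply it. To repair your route, replace the global-extrema step by the convex-minus-reflex count over all local extrema, or prove the coverage identity directly.
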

\begin{proof}
The sum of the degrees of any polygon on $k$ vertices is $(k-2)\pi$.
Every direction is covered by exactly $k-2$ angles of a $k$-gon (counting each side $1/2$ times at its endpoints). Thus, if we take every other angle of a hexagon, then, even including its sides, every direction is covered at most $4$ times. (See Figure \ref{fig:counterex1bend}.)
\end{proof}

The claim now implies that for any drawing of $G$ with at most one bend per edge, we need at least $(3(d-3)+6)/4=\frac {3}{4}(d-1)$ different slopes.
\end{proof}

\section*{Acknowledgements}
We are grateful to G\"unter Rote for pointing out a mistake in the original proof of Lemma~\ref{cutclaim} and to an anonymous referee for calling our attention to \cite{M93}.

\end{document}